\theoremstyle{plain}
\theoremstyle
{plain}
\newtheorem{theorem}{Theorem}[section]
\newtheorem{Th}[theorem]{Theorem}
\newtheorem{proposition}[theorem]{Proposition}%[section]
\newtheorem{Ps}[theorem]{Proposition}
\newtheorem{lemma}[theorem]{Lemma}%[section]
\newtheorem{corollary}[theorem]{Corollary}%[section]
\newtheorem{question}[theorem]{Question}
\theoremstyle{definition}
\newtheorem{definition}[theorem]{Definition}%[section]
\newtheorem{Df}[theorem]{Definition}
\newtheorem{example}[theorem]{Example}%[section]
\newtheorem{remark}[theorem]{Remark}%[section]
\newtheorem{Rm}[theorem]{Remark}
\newcommand{\N}{\mathbb{N}}
\newcommand{\Z}{\mathbb{Z}}
\newcommand{\CC}{\mathbb{C}}
\newcommand{\BBB}{\mathcal B}
\newcommand{\F}{\mathcal F}
\newcommand{\II}{\mathcal I}
\newcommand{\QQ}{\mathcal Q}
\newcommand{\BBBB}{\mathbb C}
\newcommand{\K}{\mathbb M}
\DeclareMathOperator{\Spec}{Spec}
\DeclareMathOperator{\dsc}{dsc}
\DeclareMathOperator{\Sym}{Sym}
\DeclareMathOperator{\supp}{supp}
\author{D. Dikranjan, I. Protasov, K. Protasova, N. Zava}\thanks{The first named author thankfully acknowledges partial financial support via the grant PRID at the Department of mathematical, Computer and Physical Sciences, Udine University}
\title{Balleans, hyperballeans and ideals}
\date{\today}
\address{Department of mathematical, Computer and Physical Sciences, Udine University, 33 100 Udine, Italy}
\email{dikran.dikranjan@uniud.it}
\address{Department of Computer Science and Cybernetics, Kyiv University, Volodymyrska 64, 01033, Kyiv, Ukraine}
\email{i.v.protasov@gmail.com; }
\address{Department of Computer Science and Cybernetics, Kyiv University, Volodymyrska 64, 01033, Kyiv, Ukraine}
\email{ksuha@freenet.com.ua}
\address{Department of mathematical, Computer and Physical Sciences, Udine University, 33 100 Udine, Italy}
\email{nicolo.zava@gmail.com}
\begin{document}
\maketitle

\begin{abstract} A ballean $\mathcal{B}$  (or a coarse structure) on a set $X$ is a family of subsets of $X$ called balls (or entourages of the diagonal in $X\times X$) defined in such a way that $\mathcal{B}$ can be considered as the asymptotic counterpart of a uniform topological space. The aim of this paper is to study two concrete balleans defined by the ideals in the Boolean algebra of all subsets of $X$ and their hyperballeans, with particular emphasis on their connectedness structure, more specifically the number of their connected components.  
\vspace{2 mm}
	
{\bf MSC} :54E15
\vspace{2mm}
	
{\bf Keywords} : balleans, coarse structure, coarse map, asymorphism, balleas defined by ideals, hyperballeans.
\end{abstract}

\section{Introduction}

\subsection{Basic definitions}

%\begin{definition}\label{DefBallean} (\cite{ProZa})
A {\em ballean} is a triple $\BBB=(X,P,B)$ where $X$ and $P$ are sets, $P\neq\emptyset$,  and $B:X\times P \to \mathcal P(X)$ is a map, with the following properties:
\begin{enumerate}[(i)]
\item $x\in B(x,\alpha)$ for every $x\in X$ and every $\alpha\in P$;
\item {\em symmetry}, i.e., for any $\alpha\in P$ and every pair of points $x,y\in X$, $x\in B(y,\alpha)$ if and only if $y\in B(x,\alpha)$;
\item {\em upper multiplicativity}, i.e., for any $\alpha,\beta\in P$, there exists a $\gamma\in P$ such that, for every $x\in X$, $B(B(x,\alpha),\beta)\subseteq B(x,\gamma)$, where $B(A,\delta)=\bigcup\{B(y,\delta)\mid y\in A\}$, for every $A\subseteq X$ and $\delta\in P$.
\end{enumerate}
The set $X$ is called {\em support of the ballean}, $P$ -- {\em set of radii}, and $B(x,\alpha)$ -- {\em ball of centre $x$ and radius $\alpha$}.
%\end{definition}

This definition of ballean does not coincide with, but it is equivalent to the usual one (see \cite{ProZar} for details).

A ballean $\mathcal{B}$   is called {\em connected} if, for any $x,y \in X$, there exists $\alpha\in P$  such that $y\in B(x,\alpha)$. 
Every ballean $(X,P,B)$ can be partitioned in its {\em connected components}{$ \,$}: the {\em connected component} of a point $x\in X$ is
$$
\QQ_X(x)=\bigcup_{\alpha\in P}B(x,\alpha).
$$
Moreover, we call a subset $A$ of a ballean $(X,P,B)$ {\em bounded} if there exists $\alpha\in P$ such that, for every $y\in A$, $A\subseteq B(y,\alpha)$. The empty set is always bounded. A ballean is {\em bounded} if its support is bounded. In particular, a bounded ballean is connected. Denote by $\flat(X)$ the family of all bounded subsets of a ballean $X$. 

 If $\BBB=(X,P,B)$ is a ballean and $Y$ a subset of $X$, one can define the {\em subballean $\BBB\restriction_Y=(Y,P,B_Y)$ on $Y$ induced by $\BBB$}, where $B_Y(y,\alpha)=B(y,\alpha)\cap Y$, for every $y\in Y$ and $\alpha\in P$.

A subset $A$ of a ballean $(X,P,B)$ is {\em thin} (or {\em pseudodiscrete}) if, for every $\alpha\in P$, there exists a bounded subset $V$ of $X$ such that $B_{A}(x,\alpha)=B(x,\alpha)\cap A=\{x\}$ for each $x\in A\setminus V$. A ballean is {\em thin} if its support is thin. Bounded balleans are obviously thin. 

We note that to each ballean on a set $X$ can be associated a {\em coarse structure} \cite{Roe}: a particular family $\mathcal E$ of subsets of $X\times X$, called {\em entourages of the diagonal} $\Delta_X$. The pair $(X,\mathcal E)$ is called a {\em coarse space}. This construction highlights the fact that balleans can be considered as asymptotic counterparts of uniform topological spaces. For a categorical look at the balleans and coarse spaces as ``two faces of the same coin'' see \cite{DikZa}.

\begin{definition}[\cite{ProZar,DikZav}]\label{def:sizes} Let $\BBB=(X,P,B)$ be a ballean. A subset $A$ of $X$ is called:
	\begin{enumerate}[(i)]
		\item {\em large in $X$} if there exists $\alpha\in P$ such that $B(A,\alpha)=X$;
		\item {\em thick in $X$} if, for every $\alpha\in P$, there exists $x\in A$ such that $B(x,\alpha)\subseteq A$; 
		\item {\em small in $X$} if, for every $\alpha\in P$, $X\setminus B(A,\alpha)$ is large in $X$.	
	\end{enumerate}
\end{definition}

%\begin{definition}[\cite{ProBan,Roe}]
Let $\BBB_X=(X,P_X,B_X)$ and $\BBB_Y=(Y,P_Y,B_Y)$ be two balleans. Then a map $f\colon X\to Y$ is called
\begin{enumerate}
[(i)]
\item {\em coarse} if for every radius $\alpha\in P_X$ there exists another radius $\beta\in P_Y$ such that $f(B_X(x,\alpha))\subseteq B_Y(f(x),\beta)$ for every point $x\in X$;
%\item a {\em $\succ$-mapping} if for every $\alpha\in P_Y$ there exists a radius $\beta\in P_X$ such that $B_Y(f(x),\alpha)\subseteq f(B_X(x,\beta))$ for every $x\in X$;
\item {\em effectively proper} if for every $\alpha\in P_Y$ there exists a radius $\beta\in P_X$ such that 
$$
f^{-1}(B_Y(f(x),\alpha))\subseteq B_X(x,\beta)\ \mbox{ for every }\ x\in X;
$$
\item a {\em coarse embedding} if it is both coarse and effectively proper;
\item an {\em asymorphism} if it is bijective and both $f$ and $f^{-1}$ are coarse or, equivalently, $f$ is bijective and both coarse and effectively proper;
\item an {\em asymorphic embedding} if it is an asymorphism onto its image or, equivalently, if it is an injective coarse embedding;
\item a {\em coarse equivalence} if it is a coarse embedding such that $f(X)$ is large in $\BBB_Y$.
\end{enumerate}
%\end{definition}

We recall that a family $\mathcal{I}$ of subsets of a set $X$ is an {\it ideal} if $A, B\in \mathcal{I}$, $C\subseteq A$ imply $A\cup B\in \mathcal{I}$, $C\in\mathcal{I}$. 
%Moreover, we say that $\II$ is a {\em $\lambda$-ideal}, where $\lambda$ is a cardinal, if $\bigcup_{\alpha<\lambda}A_\alpha\in\II$, whenever $A_\alpha\in\II$ for every $\alpha<\lambda$. 
In this paper, we always impose that $X\notin \mathcal{I}$ (so that $\mathcal{I}$ is proper) and $\II$ contains the ideal $\mathfrak{F}_{X}$ of all finite subsets of $X$. 
Because of this setting, 
%a set $X$ admits a $\lambda$-ideal $\II$, then $\lvert X\rvert> \lambda$ (in particular, 
a set $X$ that admits an ideal $\II$ is infinite, as otherwise $X\in\II$.  
%Clearly, $\omega$-ideals are just the ideals, while the $\omega_1$-ideals are also known under the name $\sigma$-ideals. 

We consider the following two balleans with support $X$  determined  by $\mathcal{I}$.

\begin{Df}
	\begin{enumerate}[(i)]
		\item{} The {\em $\mathcal{I}$-ary ballean} $X_{\mathcal{I}\mbox{-}ary}=(X, \mathcal{I}, B_{\mathcal{I}\mbox{-}ary})$, with radii set $\mathcal{I}$ and balls defined by 
$$
B_{\mathcal{I}\mbox{-}ary}(x,A) = \{x\} \cup A, \mbox{ for }x\in X\mbox{ and }A \in \mathcal{I};
$$ 
		\item{} The {\em point ideal  ballean} $X_{\mathcal{I}}= (X, \mathcal{I} ,B_{\mathcal{I}})$, where 
$$
	B_{\mathcal{I}}(x,A) =\begin{cases}
		\begin{aligned}
		&\{x\}  &\text{if $x\notin  A$,}\\
		&\{x\}\cup A= A &\text{otherwise.}\end{aligned}\end{cases}
$$ 
	\end{enumerate}
	
	The balleans $X_{\mathcal{I}\mbox{-}ary}$ and $X_{\mathcal{I}}$ are connected and unbounded. While $X_{\mathcal{I}}$ is thin,  $X_{\mathcal{I}\mbox{-}ary}$ is never thin (this follows from Proposition \ref{ex:idX} and results from \cite{ProZar} reported in Theorem \ref{theo:thin}). 
	
For every connected unbounded ballean $\BBB$ with support $X$ one can define the {\em satellite ballean} $X_{\mathcal{I}}$, where $\mathcal{I}=\flat(X)$  is the ideal of all bounded subsets of $X$.
\end{Df}

\begin{Ps}\label{ex:idX}
For every ideal $\II$ on a set $X$, the map $id_X\colon X_\II\to X_{\II\mbox{-}ary} $ is coarse, but it is not effectively proper. 
\end{Ps}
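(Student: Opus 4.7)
The plan is to verify the two statements separately, both by direct appeal to the definitions of the two balleans.

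For coarseness, I will simply show that the radius $A\in\II$ works for itself: given any $x\in X$ and any $A\in\II$, I need to see that $B_\II(x,A)\subseteq B_{\II\text{-}ary}(x,A)=\{x\}\cup A$. This is a tiny case analysis on whether $x\in A$ or not, since in both definitions of $B_\II(x,A)$ the resulting set is visibly contained in $\{x\}\cup A$. No hidden difficulty here.

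For the failure of effective properness, I will exhibit a single radius $\alpha\in\II$ witnessing the failure, namely a singleton $\alpha=\{a\}$ with $a\in X$ (which lies in $\II$ because $\mathfrak{F}_X\subseteq\II$). The ball in the target is then $B_{\II\text{-}ary}(x,\{a\})=\{x,a\}$, and I have to show that no $\beta=A'\in\II$ can satisfy $\{x,a\}\subseteq B_\II(x,A')$ for every $x\in X$. The key observation is that since $\II$ is proper and closed under finite unions, $A'\cup\{a\}\in\II\neq\mathcal P(X)$, so $X\setminus (A'\cup\{a\})$ is nonempty; pick any $x$ there. Then $x\notin A'$ forces $B_\II(x,A')=\{x\}$, which manifestly fails to contain $a$. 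This $x$ is the required witness.

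The argument is essentially a one-paragraph verification once the correct witness $\alpha=\{a\}$ is spotted; the only step requiring a moment of thought is ensuring the existence of a point outside $A'\cup\{a\}$, and this is precisely where the standing assumptions on $\II$ (properness and $\mathfrak{F}_X\subseteq\II$) enter, together with the induced infiniteness of $X$.
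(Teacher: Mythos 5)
Your proof is correct and takes essentially the same route as the paper: the paper fixes an arbitrary non-empty $F\in\II$ and, for each candidate radius $K\in\II$, uses properness of the ideal to pick $x_K\in X\setminus(F\cup K)$, so that $B_{\II\mbox{-}ary}(x_K,F)=\{x_K\}\cup F\not\subseteq\{x_K\}=B_\II(x_K,K)$; your witness $\alpha=\{a\}$ is just the singleton instance of this argument. Your explicit verification of coarseness (with $\beta=\alpha$), which the paper treats as immediate and omits, is a harmless addition.
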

\begin{proof} Pick an arbitrary non-empty element $F\in\II$. Since $\II$ is a proper ideal, for every $K\in\II$, there exists $x_K\in X\setminus(F\cup K)$. Hence, in particular,
$$
B_{\II\mbox{-}ary}(x_K,F)=\{x_K\}\cup F\not\subseteq \{x_K\}=B_\II(x_K,K).
$$
\end{proof}

Let $\BBB=(X,P,B)$ be a ballean. Then the radii set $P$ can be endowed with a preorder $\leq_\BBB$ as follows: for every $\alpha,\beta\in P$, $\alpha\leq_\BBB\beta$ if and only if $B(x,\alpha)\subseteq  B(x,\beta)$, for every $x\in X$. A subset $P^\prime\subseteq P$ is {\em cofinal} if it is cofinal in this preorder (i.e., for every $\alpha\in P$, there exists $\alpha^\prime\in P^\prime$, such that $\alpha\leq_\BBB\alpha^\prime$). If $P^\prime$ is cofinal, then $\BBB=(X,P^\prime,B^\prime)$, where $B^\prime=B\restriction_{X\times P^\prime}$. If $\II$ is an ideal on a set $X$, then both the preorders $\leq_{\BBB_\II}$ and $\leq_{\BBB_{\II\mbox{-}ary}}$ on $\II$ coincide with the natural preorder $\subseteq$ on $\II$, defined by inclusion.

\begin{remark}\label{rem:asy}
Let $(X,P_X,B_X)$ and $(Y,P_Y,B_Y)$ be two balleans and $f\colon X\to Y$ be an injective map. We want to give some sufficient conditions that implies the effective properness of $f$.

(i) Suppose that there exist two cofinal subsets of radii $P^\prime_X$ and $P^\prime_Y$ of $P_X$ and $P_Y$, respectively, and a bijection $\psi\colon P_X^\prime\to P_Y^\prime$ such that, for every $\alpha\in P_X^\prime$ and every $x\in X$,
\begin{equation}\label{eq:rem_asy}
f(B_X(x,\alpha))=B_Y(f(x),\psi(\alpha))\cap f(X).
\end{equation}
We claim that, under these hypothesis, $f$ is a coarse embedding and then $f\colon X\to f(X)$ is an asymorphism. 

First of all, let us check that $f$ is coarse. Fix a radius $\alpha\in P_X$ and let $\alpha^\prime\in P_X^\prime$ such that $\alpha\leq\alpha^\prime$. Hence 
$$
f(B_X(x,\alpha))\subseteq f(B_X(x,\alpha^\prime))\subseteq B_Y(f(x),\psi(\alpha^\prime)),
$$ 
for every $x\in X$, where the last inclusion holds because of \eqref{eq:rem_asy}. As for the effective properness, since $f$ is bijective, \eqref{eq:rem_asy} is equivalent to
$$
B_X(x,\alpha)=f^{-1}(B_Y(f(x),\psi(\alpha))),
$$
for every $x\in X$, and this yelds to the thesis. In fact, for every $\beta\in P_Y$, there exists $\alpha^\prime\in P_X^\prime$ such that $\beta\leq\psi(\alpha^\prime)$ and thus, for every $x\in X$,
$$
f^{-1}(B_Y(f(x),\beta))\subseteq f^{-1}(B_Y(f(x),\psi(\alpha^\prime)))=B_X(x,\alpha^\prime).
$$

(ii) Note that $f\colon X\to Y$ is a coarse embedding if and only if $f\colon X\to f(X)$ is a coarse embedding, where $f(X)$ is endowed with the subballean structure inherited by $Y$. Suppose that $P^\prime_X\subseteq P_X$ and $P^{\prime\prime}_{f(X)}\subseteq P_Y$ are cofinal subsets of radii in $X$ and $f(X)$, respectively, and $\psi\colon P^\prime_X\to P^{\prime\prime}_{f(X)}$ is a bijection such that \eqref{eq:rem_asy} holds for every $x\in X$. Then $f$ is a coarse embedding. 

(iii) In notations of item (ii), in order to show that $P^{\prime\prime}_{f(X)}$ is cofinal in $f(X)$, it is enough to provide a cofinal subset of radii $P_Y^\prime\subseteq P_Y$ in $Y$ and a bijection $\varphi\colon P^{\prime\prime}_{f(X)}\to P_Y^\prime$ such that, for every $y\in f(X)$ and every $\alpha\in P_{f(X)}^{\prime\prime}$, $B_Y(y,\alpha)\cap f(X)=B_Y(y,\varphi(\alpha))\cap f(X)$.
\end{remark}

\subsection{Hyperballeans}

\begin{definition}
Let $\BBB=(X,P,B)$ be a ballean. Define its {\em hyperballean} to be $\exp(\BBB)=(\mathcal P(X),P,\exp B)$, where, for every $A\subseteq X$ and $\alpha\in P$,
\begin{equation}\label{hyper}
\exp B(A,\alpha)=\{C\in\mathcal P(X)\mid A\subseteq B(C,\alpha),\,C\subseteq B(A,\alpha)\}.
\end{equation}
\end{definition}

It is not hard to check that this  defines actually a ballean. Another easy observation is the following: for every ballean $(X,P,B)$, $\QQ_{\exp X}(\emptyset)=\{\emptyset\}$ and, in particular $\exp B(\{\emptyset\},\alpha)=\{\emptyset\}$ for every $\alpha\in P$, since $B(\emptyset,\alpha)=\emptyset$.  Motivated by this, we shall consider also the subballean $\exp^\ast(X)= \exp(X)\setminus \{\emptyset\}$. 

If $\BBB=(X,P,B)$ is a ballean, the subballean $X^\flat$ of $\exp\BBB$ having as support the family of all non-empty bounded subsets of $\BBB$ was already defined and studied in \cite{ProPro}. Note that, $\BBB$ is connected (resp., unbounded) if and only if $\BBB^\flat$ is connected (resp., unbounded). 

In the sequel we focus our attention on four hyperballeans defined by an ideal $\II$ on a set $X$. In particular, we investigate $\exp X_\II$ and $\exp X_{\II\mbox{-}ary}$, as well as their subballeans $X_{\II\mbox{-}ary}^{\flat}$ and $X_\II^{\flat}$.

So $\exp X_\mathcal I=(\mathcal P(X),\mathcal I,\exp B_\II)$, and, according to \eqref{hyper}, for $A\subseteq X$ and $K\in \mathcal I$ one has 

\begin{equation}\label{14sett}
\exp B_\II(A,K)=\begin{cases}\begin{aligned}
&\{(A\setminus K)\cup Y\mid\emptyset\neq Y\subseteq K\}&\text{if $A\cap K\neq\emptyset$,}\\
&\{A\}&\text{otherwise.}\end{aligned}\end{cases}
\end{equation}

 In fact, fix $C\in\exp B_\II(A,K)$. If $A\cap K=\emptyset$, then $C=A$ (as $B_\II(A,K)=A$ and, for every $A^\prime\subseteq A$, $B_\II(A^\prime,K)=A^\prime$). Otherwise, $C\subseteq B_\II(A,K)=A\cup K$. Moreover, $A\subseteq B_\II(C,K)$ if and only if $C\cap K\neq\emptyset$ and $A\subseteq C\cup K$. In other words,
$$
\exp B_\II (A,K) =\{Z\in \mathcal P(X)\mid A\setminus K \subsetneq Z \subseteq A\cup K \}, \ \ \  \text{if $A\cap K\neq\emptyset$}.% \eqno(**)
$$

Let us now compute the balls in $\exp X_{\II\mbox{-}ary}=(\mathcal P(X), \mathcal{I},\exp B_{\II\mbox{-}ary})$. As mentioned above, $\exp B_{\II\mbox{-}ary}(\{\emptyset\},K)=\{\emptyset\}$ for every $K\in\II$. Fix now a non-empty subset $A$ of $X$ and a radius $K\in\II$. Then a non-empty subset $C\subseteq X$ belongs to $\exp B_{\II\mbox{-}ary}(A,K)$ if and only if 
$$
C\subseteq B_{\II\mbox{-}ay}(A,K)=A\cup K\quad\mbox{and}\quad A\subseteq B_{\II\mbox{-}ary}(C,K)=C\cup K,
$$
since both $A$ and $C$ are non-empty. Hence
\begin{align*}
\exp B_{\II\mbox{-}ary}(A,K)&\,=\{(A\setminus K)\cup Y\mid Y\subseteq A,\,(A\setminus K)\cup Y\neq\emptyset\} =\\
&\,= \{Z\in \mathcal P(X)\mid A\setminus K \subseteq Z \subseteq A\cup K,\,Z\neq\emptyset\} 
\end{align*}
for every $\emptyset\neq A\subseteq X$ and $K\in\mathcal I$.
%\begin{equation}\label{14sett}
%\exp B_\II (\{\emptyset\},A)=\exp B_{\II\mbox{-}ary}(\{\emptyset\},A)= \{\emptyset\}. 
%\end{equation}}

By putting all together, one obtains that, for every $A\subseteq X$ and every $K\in\II$,

\begin{equation}\label{14sett*}
\exp B_{\II\mbox{-}ary}(A,K)=\begin{cases}
\begin{aligned}
&\{Z\in\mathcal P(X)\mid A\setminus K\subseteq Z\subseteq A\cup K,\,Z\neq\emptyset\}&\text{if $A\neq\emptyset$,}\\
&\{A\}&\text{otherwise $A=\emptyset$.}\end{aligned}
\end{cases}
\end{equation}

\begin{remark}\label{Bool} Denote by $\BBBB_X:=\{0,1\}^X$ the  Boolean ring  of all function $X \to \{0,1\}= \Z_2$ and for $f\in \BBBB_X$ let $\supp f = \{x\in X\mid f(x) = 1\}$. Then one has a ring isomorphism $\jmath=\jmath_X\colon \mathcal P(X)\to \BBBB_X$, sending $A\in\mathcal P(X)$ to its characteristic function $\chi_A\in \BBBB_X$, so $\jmath(\emptyset) = 0$,  the zero function. Using $\jmath$, one can transfer the ball structure from $\exp B_{\II\mbox{-}ary}$ to $\BBBB_X$: for $0\ne f\in \{0,1\}^X$ and $A \in \mathcal I$ one has 
\begin{equation}\label{14sett0}
\jmath(\exp B_{\II\mbox{-}ary}(\jmath^{-1}(f),A))= \{g\mid g(x)= f(x), \  x\in X\setminus A\}=\{g\mid g\restriction_{X\setminus A}= f\restriction_{X\setminus A}\}. 
\end{equation}
While, according to (\ref{14sett}) and (\ref{14sett*}), the empty set is ``isolated" in both balleans $\exp X_\mathcal I$ and $X_{\II\mbox{-}ary}$, 
the set $\{g\mid g(x)= 0, \  x\in X\setminus A\}=\{g\mid g[X\setminus A]= \{0\}\}$ (i.e., the functions $g$ with $\supp g \subseteq A$), 
 still makes sense and seems a more natural candidate for a ball of radius $A$ centered at the zero function. 

Taking into account this observation, we modify the ballean structure on $\BBBB_X$, denoting by $\CC(X,\II)$ the new ballean, 
with balls defined by the unique formula suggested by (\ref{14sett0}): 
\begin{equation}\label{15sett}
B_{\CC(X,\II)}(f,A) := \{g\mid g(x)= f(x), \  x\in X\setminus A\}=\{g\mid g\restriction_{X\setminus A}= f\restriction_{X\setminus A}\}, 
\end{equation}
where $A\in \mathcal I$; when no confusion is possible, we shall write shortly $B_{\CC}(f,A)$. In this way 
%\MB\footnote{we are  keeping all the balls centred at a non-zero function (\ref{14sett0}), and adding only new balls centred at the zero function 0, i.e.,}  
\begin{equation}\label{16sett}
\jmath\restriction_{\exp^*(X_{\II\mbox{-}ary})}: \exp^*(X_{\II\mbox{-}ary})\to \CC(X,\II)
\end{equation}
 is an asymorphic embedding. 
%Taking into account this phenomenon, we modify the ballean structure on $\BBBB_X$, by keeping all these balls centred at a non-zero function (\ref{14sett0}),  and by adding as new balls around the zero function 0, namely $\{g\mid g[X\setminus A]= \{0\}\}$, for $A \in \mathcal I$. In this way we obtain a new ballean structure on the supporting set $\BBBB_X$ of $ \{0,1\}$-valued functions on $X$, that we shall denote in the sequel by $\CC(X,\II)$. 
The ballean $\CC(X,\II)$, as well as its subballean $\K(X,\II)$, having as support the ideal $\{g\in \BBBB_X \mid \supp g \in \mathcal I \}$
of the ring $\BBBB_X$, will play a prominent role in the paper (note that $\K(X,\II)\setminus \{\emptyset\}$ coincides with $\jmath(X_{\II-ary}^\flat)$).
% under (\ref{16sett}). 
\footnote{Sometimes we refer to $\CC(X,\II)$ as the $\II$-{\em Cartesian ballean}. Its 
ballean structure makes both ring operations on  $\CC(X,\II)$ coarse maps, while $\exp(X_{\II\mbox{-}ary})$
fails to have this property.
% $\CC(X,\II)\times \CC(X,\II) \to  \CC(X,\II)$.
}

If $X=\mathbb{N}$ and $\II=\mathfrak F_{\N}$, then $\K(X,\II)$ is the {\em Cantor macrocube} defined in \cite{ProPro}. Motivated by this, the ballean $\K(X,\II)$, for an ideal $\II$ on a set $X$, will be called the $\mathcal I $-{\em macrocube} (o, shortly, a {\em macrocube}) in the sequel. 
\end{remark}

\begin{remark} One of the main motivations for the above definitions comes from the study of topology of hyperspaces. For an infinite discrete space $X$, 
the set  $\mathcal P(X)$ admits two standard non-discrete topologizatons via the Vietoris topology and via the Tikhonov topology.

In the case of the Vietoris topology, the local base at the point $Y\in \mathcal P(X)$ consists of all subsets of $X$ of the form 
$\{Z\in \mathcal P(X): K\subseteq  Z \subseteq Y \}$, where $K$ runs over the family of all finite subsets of  $Y$. The Tikhonov topology arises after identification of $\mathcal P(X)$ with $\{0,1\}^X$ via the characteristic functions of subsets of $ X$.
    Given an ideal $\mathcal I$ on $X$, the point ideal ballean $X_\mathcal  I$  can be considered as one of the possible asymptotic versions of the discrete space $X$, see Section 2. With these observations, one can look at $\exp X_\mathcal I$ as a counterpart of the Vietoris hyperspace of $X$, 
    and the Tikhonov hyperspace of $X$ has two counterparts $C(X,\mathcal  I)$ and $\exp  X_{\mathcal I\mbox{-}ary}$.
    These parallels are especially evident in the case of the ideal $\mathfrak F_X$ of finite subsets of $X$.
\end{remark}

%Let $f\colon X\to Y$ a map between sets. Then there is a natural definition for a map $f^\mathcal P\colon\mathcal P(X)\to\mathcal P(Y)$, i.e., $f^\mathcal P(A)=f(A)$, for every $A\subseteq X$. If $f\colon X\to Y$ is a coarse map between two balleans, then the restriction $f^\flat=f^\mathcal P\restriction_{X^\flat}\colon X^\flat\to Y^\flat$ is well defined.

\subsection{Main results}

In this paper we focus on hyperballeans
of balleans defined by means of ideals, these are the point ideal balleans and the $\II$-ary balleans. It is known (\cite{ProZar}) that the point ideal balleans are precisely the thin balleans. Inspired by this fact, in \S\ref{sec:thin}, we give some further equivalent properties (Theorem \ref{theo:thin}). 

As already anticipated the main objects of study will be the hyperballeans $\exp(X_\II)$, $\exp(X_{\II\mbox{-}ary})$
and $\CC(X,\II)$, where $\II$ is an ideal of a set $X$. By restriction, we will gain also knowledge of their subballeans $X_\II^\flat$, $X_{\II\mbox{-}ary}^\flat$ and the $\II$-macrocube $\K(X,\II)$. Since 
$X_\II$, $X_{\II\mbox{-}ary}$ and $\CC(X,\II)$ are pairwise different, it is natural to ask whether their hyperballeans are different or not. Section \S\ref{sec:3} is devoted to answering this question, comparing these three balleans from various points of view. 
In particular, we prove that $\exp(X_\II)$ and $\exp(X_{\II\mbox{-}ary})$ are different (Corollary \ref{coro:theo4.1}), although they have asymorphic subballeans (Theorem \ref{theo:3.3}), the same holds for the pair $\exp X_{\II\mbox{-}ary}$ and $\BBBB(X,\II)$. 
 Moreover, we show that $\BBBB(X,\II)$ (and in particular, $\exp^\ast(X_{\mathcal{I}{\mbox{-}ary}})$) is coarsely equivalent to a 
subballean of $\exp(X_{\mathcal{I}})$; so $\K(X,\II)$ (and in particular, $X_{\mathcal{I}\mbox{-}ary}^\flat$) is  coarsely equivalent to a subballean of $X_{\mathcal{I}}^\flat$

The final part of the section is dedicated to a special class of ideals defined as follows. For a cardinal  $\kappa$ and $\lambda\leq\kappa$ consider the ideal 
$$
\mathcal K_\lambda=\{F\subseteq\kappa\mid\lvert F\rvert<\lambda\}
$$ 
of $\kappa.$ A relevant property of this ideal is {\em homogeneity} (i.e., it is invariant under the natural action of the group $\Sym(\kappa)$ by permutations of $\kappa$ \footnote{Consequently, all these permutations become automatically asymorphisms, once we endow $\kappa$ with the point ideal ballean or the $\mathcal K_\lambda$-ary ideal structure. One can easily see that these are the only homogeneous ideals of $\kappa$.}). 
 
 For the sake of brevity denote by $\mathcal K$ the ideal $\mathcal K_\kappa$ of $\kappa$. Theorem \ref{theo:Kcubes} provides a bijective coarse embedding of a subballean of $\exp(\kappa_{\mathcal K})$ into $\exp(\kappa_{\mathcal K\mbox{-}ary})$ and, under the hypothesis of regularity of $\kappa$, also $\exp(\kappa_{\mathcal K})$ itself asymorphically embeds into $\exp(\kappa_{\mathcal K\mbox{-}ary})$.

To measure the level of disconnectedness of a ballean $\mathcal{B}$, one can consider the number $\dsc(\mathcal{B})$ of connected components of $\mathcal{B}$. Although the two hyperballeans $\exp(X_\II)$ and $\exp(X_{\II\mbox{-}ary})$ are different, they 
have the same connected components and in particular, $\dsc(\exp(X_\II))=\dsc(\exp(X_{\II\mbox{-}ary}))$. Moreover, 
this cardinal coincides with $\dsc(\BBBB(X,\mathcal{I})) + 1$
(Proposition \ref{prop:dsc}). The main goal of Section \S\ref{sec:dsc} is to compute the cardinal number $\dsc(\BBBB(X,\mathcal{I}))$. 
To this end we use a compact subspace $\mathcal{I}^\wedge$ of the Stone-\v Cech remained $\beta X \setminus X$ of the discrete space $X$.
In this terms, $\dsc(\BBBB(X,\mathcal{I})) = w(\mathcal{I}^\wedge)$. 
%We do this by appropriately combining algebraic and topological tools. 
%%give some estimates of the cardinal number $\dsc(\BBBB(X,\mathcal{I}))$, 
%%both from above and from below. 
%By combining algebraic and topological tools, we provide, in Theorem \ref{prop5.4}, lower and upper bounds to $\dsc(\BBBB(X,\mathcal{I}))$. Using that result, we are able to calculate the number $\dsc(\BBBB(X,\mathcal{I})) $ in some special cases (in particular, we obtain a complete picture for countable $X$). 
%
%In the last subsection \S 4.3 we show that in case the number $\dsc(\exp(X_\II))$ 
%is infinite, it attains the greatest possible value $2^{|X|}$ under several circumstances, among them: 
%\begin{itemize}
%\item[(i)] when  the ideal $\mathcal I$ has the form $\mathcal K_\lambda$ of $\kappa$, where $\lambda\leq\kappa$; 
%\item[(ii)] under some additional set-theoretic assumptions, as the Lusin Hypothesis,
%or in some forcing models of ZFC. 
%\end{itemize}

\section{Characterisation of thin connected balleans}\label{sec:thin}

 Let $\BBB=(X,P,B)$ be a bounded ballean. Then $\BBB$ is thin. Moreover, $\flat(X)=\mathcal P(X)$, while every proper subset of $X$ is non-thick and the only small subset is the empty set. Hence, we now focus on unbounded balleans. %$\mathcal M(X)=\mathcal P(X)\setminus\{X\}$, and $\mathcal S(X)=\{\emptyset\}$. Theorem \ref{theo:thin} characterises thin balleans among the unbounded connected balleans.
It is known (\cite{ProZar}) that a connected unbounded ballean $\mathcal{B}$ is thin if and only if the identity mapping of $X$ defines an asymorphism between $\mathcal{B}$ and its {\em satellite ballean}. It was also shown that these properties are equivalent to having all functions $f\colon X\to \{0,1\}$ being slowly oscillating (such a function  is called {\it slowly oscillating} if, for every $\alpha\in P$ there exists a bounded subset $V$ such that $\lvert f(B(x,\alpha))\rvert=1$ for each $x\in X\setminus V$; this is a specialisation (for $\{0,1\}$-valued functions) of the usual more general notion, \cite{ProZar}).  Theorem \ref{theo:thin} provides further equivalent properties.

For a ballean $\mathcal{B}= (X, P, B)$, we define a mapping $C\colon X\to\mathcal P(X)$ by $C(x)=X\setminus \{x\}$.

\begin{lemma}\label{lemma:thin}
Let $\mathcal{B}= (X, P, B)$ be a connected unbounded ballean. If $Y$ is a subset of $X$, then $C(Y)$ is bounded in $\exp(\mathcal B)$ if and only if there exists $\alpha\in P$ such that $\lvert B(y,\alpha)\rvert>1$, for every $y\in Y$.
\end{lemma}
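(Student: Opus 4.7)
The plan is to unwind the definitions on both sides and reduce the question to a single combinatorial condition. By the definition of bounded, $C(Y)\subseteq\mathcal P(X)$ is bounded in $\exp(\BBB)$ if and only if there exists $\alpha\in P$ such that $C(Y)\subseteq\exp B(X\setminus\{y\},\alpha)$ for every $y\in Y$. Unfolding this via \eqref{hyper} yields that, for all $y,y'\in Y$, both
$$
X\setminus\{y\}\subseteq B(X\setminus\{y'\},\alpha)\quad\text{and}\quad X\setminus\{y'\}\subseteq B(X\setminus\{y\},\alpha)
$$
must hold.

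The first step would be to observe that for every $x\in X\setminus\{y,y'\}$ one has $x\in B(x,\alpha)\subseteq B(X\setminus\{y'\},\alpha)$ automatically, so the only element of $X\setminus\{y\}$ that can fail to belong to $B(X\setminus\{y'\},\alpha)$ is $x=y'$, which is relevant only when $y\neq y'$. By the symmetry axiom, the condition $y'\in B(X\setminus\{y'\},\alpha)$ is equivalent to the existence of some $z\in X\setminus\{y'\}$ with $z\in B(y',\alpha)$, i.e.\ to $|B(y',\alpha)|>1$; the symmetric inclusion contributes $|B(y,\alpha)|>1$. Consequently, both displayed inclusions hold for all $y,y'\in Y$ if and only if $|B(y,\alpha)|>1$ for every $y\in Y$ having some companion $y'\in Y\setminus\{y\}$, while no condition arises when $y=y'$.

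From here both implications are short. For $(\Leftarrow)$, any $\alpha$ satisfying $|B(y,\alpha)|>1$ for each $y\in Y$ directly witnesses boundedness of $C(Y)$ via the computation above. For $(\Rightarrow)$, if $|Y|\ge 2$ and $\alpha$ witnesses boundedness, then for each $y'\in Y$ one can pick $y\in Y\setminus\{y'\}$, forcing $|B(y',\alpha)|>1$, so the same $\alpha$ works uniformly on $Y$. The corner cases $|Y|\le 1$ will be handled separately: when $Y=\emptyset$ both assertions are vacuous, while for $Y=\{y_0\}$ the set $C(Y)$ is a singleton (hence bounded) and some $\alpha$ with $|B(y_0,\alpha)|>1$ exists by connectedness of $\BBB$ together with $|X|\ge 2$, which is in turn forced by unboundedness.

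The main obstacle to watch out for is just the bookkeeping distinguishing the case $y=y'$ from $y\neq y'$ inside the definition of $\exp B$, together with the boundary handling when $Y$ is too small to supply companions; once these are sorted, the equivalence is a direct consequence of the symmetry and connectedness axioms.
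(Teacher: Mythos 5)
Your proof is correct and takes essentially the same route as the paper's: both directions unwind the definition of $\exp B$, observe via the symmetry axiom that the only point of $X\setminus\{y\}$ at issue in the inclusion $X\setminus\{y\}\subseteq B(X\setminus\{y'\},\alpha)$ is $y'$ itself, and translate $y'\in B(X\setminus\{y'\},\alpha)$ into $\lvert B(y',\alpha)\rvert>1$. The only differences are cosmetic — for $(\Leftarrow)$ the paper places $C(Y)$ inside the single ball $\exp B(X,\alpha)$ centered at $X$ instead of using centers in $C(Y)$ as you do, and your explicit handling of $\lvert Y\rvert\le 1$ (where the paper's forward argument, which quantifies over pairs $x\neq y$ in $Y$, is silently vacuous and needs exactly your connectedness-plus-unboundedness remark) is a point of extra care rather than a divergence.
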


\begin{proof} ($\to$) Since $C(Y)$ is bounded in $\exp(\mathcal B)$, there exists $\alpha\in P$ such that, for every $x,y\in Y$ with $x\neq y$, $C(y)\in \exp B(C(x),\alpha)$. Hence $y\in X\setminus\{x\}\subseteq B(X\setminus\{y\},\alpha)$ and $x\in X\setminus\{y\}\subseteq B(X\setminus\{x\},\alpha)$, in particular, $y\in B(Y\setminus\{y\},\alpha)$ and $x\in B(Y\setminus\{x\},\alpha)$, from which the conclusion descends.

($\gets$) Since, for every $y\in Y$, there exists $z\in Y\setminus\{y\}$ such that $y\in B(z,\alpha)$, $C(y)\in \exp B (X,\alpha)$. Hence $C(Y)\subseteq \exp B(X,\alpha)$, and the latter is bounded.
\end{proof}

If $\BBB$ is a ballean, denote by $\BBB^{\mathcal M}$  the subballean of $\exp\BBB$ whose support is the family of all non-thick non-empty  subsets of $X$. If $\BBB$ is unbounded, then so it is $\BBB^{\mathcal M}$. Moreover, $\BBB^\flat$ is a subballean of $\BBB^\mathcal M$.
 This motivation for the choice of $\mathcal M$ comes from the fact that non-thick subsets\footnote{or, equivalently, those subsets whose complement is large} were called {\em meshy} in \cite{DikZav} (this term will not be adopted here). 
 
\begin{theorem}\label{theo:thin}
Let $\BBB=(X,P,B)$ be an unbounded connected ballean. Then the following properties are equivalent:
\begin{enumerate}[(i)] 
  \item $\BBB$ is thin;
  \item $\BBB=\BBB_\II$, where $\II=\flat(X)$, i.e., $\BBB$ coincides with its satellite ballean;
  \item if $A\subseteq X$ is not thick, then $A$ is bounded;
  \item $\BBB^{\mathcal M}$ is connected;
  \item the map $C\colon X\to\mathcal P(X)$ is an asymorphism between $X$ and $C(X)$;%\footnote{Is it equivalent to ask the map $CB\colon X^\flat\to\exp X$ tale che $CB(A)=X\setminus A$ to be nn asymorphism?}
  \item every function $f\colon X\to \{0,1\}$ is slowly oscillating.
\end{enumerate}
\end{theorem}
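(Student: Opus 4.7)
The equivalences $(i)\Leftrightarrow(ii)$ and $(i)\Leftrightarrow(vi)$ are already established in \cite{ProZar}, so it remains to link $(iii)$, $(iv)$ and $(v)$ to thinness. I will prove $(i)\Leftrightarrow(iii)$, then $(iii)\Leftrightarrow(iv)$, and finally $(i)\Leftrightarrow(v)$ via Lemma~\ref{lemma:thin}. Throughout I shall use three basic facts valid in any connected ballean: every ball is bounded (via upper multiplicativity), finite unions of bounded sets are bounded, and every singleton $\{x_0\}$ is non-thick when $\BBB$ is unbounded (since $\QQ_X(x_0)=X$ provides some $y\ne x_0$ with $y\in B(x_0,\alpha)$).

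For $(i)\Rightarrow(iii)$: if $A$ is not thick, pick $\alpha\in P$ witnessing non-thickness (so $B(x,\alpha)\not\subseteq A$ for every $x\in A$); thinness supplies a bounded $V$ with $B(x,\alpha)=\{x\}$ off $V$, so a point $x\in A\setminus V$ would force $\{x\}=B(x,\alpha)\subseteq A$, a contradiction; hence $A\subseteq V$. The reverse $(iii)\Rightarrow(i)$ is the delicate direction. Fix $\alpha$ and set $V_\alpha=\{y\in X:|B(y,\alpha)|>1\}$. By Zorn, choose a maximal $A\subseteq V_\alpha$ whose distinct elements $a,a'$ satisfy $a'\notin B(a,\alpha)$. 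Each $a\in A\subseteq V_\alpha$ admits some $z\in B(a,\alpha)\setminus\{a\}$, and the separation of $A$ forces $z\notin A$, so no $B(a,\alpha)$ sits inside $A$; thus $A$ is not thick and, by $(iii)$, bounded. Maximality yields $V_\alpha\subseteq B(A,\alpha)$, and upper multiplicativity converts a boundedness radius $\gamma$ of $A$ together with $\alpha$ into a single $\delta$ with $B(A,\alpha)\subseteq B(a_0,\delta)$ for any chosen $a_0\in A$. Hence $V_\alpha$ is bounded, which is $(i)$.

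For $(iii)\Leftrightarrow(iv)$: assuming $(iii)$, any two non-thick non-empty $A,B$ are bounded, so $A\cup B$ is bounded by some $\gamma$, and then $B\in\exp B(A,\gamma)$ by the very definition~\eqref{hyper}; this shows $\BBB^{\mathcal M}$ is connected. Conversely, with $(iv)$ in hand, apply connectedness of $\BBB^{\mathcal M}$ to $\{x_0\}$ (non-thick) and to an arbitrary non-thick non-empty $B$: one obtains $\alpha$ with $B\subseteq B(\{x_0\},\alpha)=B(x_0,\alpha)$, and this ball is bounded, so $B$ is bounded.

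For $(i)\Leftrightarrow(v)$: a direct case analysis based on~\eqref{hyper} and symmetry shows that for distinct $x,y\in X$,
\[
C(y)\in\exp B(C(x),\beta)\iff |B(x,\beta)|>1\text{ and }|B(y,\beta)|>1.
\]
Coarseness of $C$ is then immediate with $\beta:=\alpha$, since $y\in B(x,\alpha)\setminus\{x\}$ forces both $B(x,\alpha)$ and $B(y,\alpha)$ to contain the pair $\{x,y\}$. Because $C$ is injective, $(v)$ reduces to the effective properness of $C$. If $(v)$ holds, Lemma~\ref{lemma:thin} (with witness $\alpha=\beta$) makes $C(V_\beta)$ bounded in $\exp X$; since the inverse of an asymorphism is coarse, $V_\beta=C^{-1}(C(V_\beta))$ is bounded, and this is $(i)$. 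Conversely, under $(i)$ a boundedness radius $\gamma$ of $V_\beta$ also bounds the pre-image $C^{-1}(\exp B(C(x),\beta)\cap C(X))$: by the displayed equivalence this pre-image is $\{x\}$ when $x\notin V_\beta$ and equals $V_\beta\ni x$ when $x\in V_\beta$, and in both cases it is contained in $B(x,\gamma)$; so $C$ is effectively proper, hence an asymorphism onto $C(X)$. The main obstacle throughout is the $(iii)\Rightarrow(i)$ step, where the Zorn construction of the $\alpha$-separated set $A$ and the careful use of upper multiplicativity to pass from boundedness of $A$ to boundedness of $B(A,\alpha)\supseteq V_\alpha$ need some bookkeeping.
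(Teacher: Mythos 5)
Your proof is correct, but it closes the cycle along genuinely different edges than the paper does, in two places. For $(iii)\Rightarrow(i)$ the paper never argues directly: it proves $(iii)\Rightarrow(vi)$ by a two-colouring argument (if $f$ is not slowly oscillating, the set $A=\{x\in X\mid \lvert f(B(x,\alpha))\rvert=2\}$ is unbounded and splits as $A_0\cup A_1$ with both parts non-thick and one unbounded, contradicting $(iii)$), and then leans on the cited equivalence $(i)\leftrightarrow(vi)$ from \cite{ProZar}; your Zorn construction of a maximal $\alpha$-separated subset $A\subseteq V_\alpha=\{y\mid\lvert B(y,\alpha)\rvert>1\}$ --- non-thick by separation, hence bounded by $(iii)$, with $V_\alpha\subseteq B(A,\alpha)\subseteq B(a_0,\delta)$ via upper multiplicativity --- is the classical ``maximal packing'' argument from coarse geometry and makes this leg self-contained (your bookkeeping checks out, including the degenerate case where the maximal separated set is empty, which forces $V_\alpha=\emptyset$). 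Similarly, for item (v) the paper proves $(ii)\Rightarrow(v)$ by computing $C(B_\II(x,V))=\exp B_\II(C(x),V)\cap C(X)$ in the satellite ballean, thus routing through the cited $(i)\leftrightarrow(ii)$, whereas you prove $(i)\Rightarrow(v)$ directly from the characterization that, for distinct $x,y$, $C(y)\in\exp B(C(x),\beta)$ if and only if $\lvert B(x,\beta)\rvert>1$ and $\lvert B(y,\beta)\rvert>1$; this identifies $C^{-1}\bigl(\exp B(C(x),\beta)\cap C(X)\bigr)$ as $\{x\}$ or $V_\beta$ and extracts effective properness from the boundedness of $V_\beta$, which is exactly what thinness gives. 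Both you and the paper use Lemma~\ref{lemma:thin} for $(v)\Rightarrow(i)$, and your treatments of $(i)\Rightarrow(iii)$ and $(iii)\Leftrightarrow(iv)$ coincide in substance with the paper's (there, $(iii)\Rightarrow(iv)$ is the remark that $(iii)$ forces $\BBB^{\mathcal M}=\BBB^\flat$, which is your bounded-union computation in disguise). The trade-off: the paper's proof is shorter because it reuses the quoted equivalences as load-bearing implications, while yours invokes them only where unavoidable and is otherwise independent of \cite{ProZar} for the new items $(iii)$, $(iv)$, $(v)$ --- at the cost of the heavier direct argument for $(iii)\Rightarrow(i)$.
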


\begin{proof} The implication (iii)$\to$(iv) is  trivial, since item (iii) implies that $\BBB^\mathcal M=\BBB^\flat$ and the latter is connected.  Furthermore,  (i)$\leftrightarrow$(ii) and (i)$\leftrightarrow$(vi) have already been proved in \cite{ProZar}.

(iv)$\to$(iii) Assume that $A\subseteq X$ is not thick. 
Fix arbitrarily a point $x\in X$. The singleton $\{x\}$ is bounded, hence non-thick.
By our assumption, $\BBB^{\mathcal M}$ is connected and both $A$ and $\{x\}$ are non-thick, so there must be a ball centred at $x$ and containing $A$. Therefore, $A$ is bounded.

\smallskip
(v)$\to$(i) If $\mathcal{B}$ is not thin then there is an unbounded  subset $Y$ of $X$ satisfying Lemma \ref{lemma:thin}. Since $C(Y)$ is bounded in
$\exp \mathcal{B}$, we see that $C$  is  not an asymorphism.

\smallskip
 (ii)$\to$(v) On the other hand, suppose that $\BBB=\BBB_\II$. Fix a radius $V\in\II$. Without loss of generality, suppose that $V$ has at least two elements. Now, pick an arbitrary point $x\in X$. If $x\in V$, then
$$
C(B_\II(x,V))\!=\!\{X\setminus\{y\}\mid y\in V\}\!=\!\{A\in C(X)\mid X\setminus(V\cup\{x\})\subsetneq A\subseteq X\}\!=\!\exp B_\II(C(x),V)\cap C(X).
$$
If, otherwise, $x\notin V$, then
$$
C(B_\II(x,V))\!=\!\{X\setminus\{x\}\}\!=\!\{A\in C(X)\mid (X\setminus\{x\})\setminus V\subsetneq A\subseteq X\setminus\{x\}\}\!=\!\exp B_\II(C(x),V)\cap C(X).
$$

\smallskip
(i)$\to$(iii) Suppose that $\BBB$ is thin and $A$ is an unbounded subset of $X$. We claim that $A$ is thick. Fix a radius $\alpha\in P$ and let $V\subseteq X$ be a bounded subset of $X$ such that $B(x,\alpha)=\{x\}$, for every $x\notin V$. Since $A$ is unbounded, there exists a point $x_\alpha\in A\setminus V$. Hence $B(x_\alpha,\alpha)=\{x_\alpha\}\subseteq A$, which shows that $A$ is thick.

%\smallskip
%\MB Protasov\MB (vi)$\to$(iii) ($\neg$(iii)$\to\neg$(vi)) Assume that an unbounded subset $A$ of $X$  is not thick and take $\alpha\in P$  such that $B(a, \alpha)\cap (X\setminus A)\neq\emptyset$ for each  $a\in A$. We take the characteristic function of $f_{A}: X\longrightarrow\{0, 1\}$ of $A$. Then $f(B(a,\alpha))= \{0, 1\}$ for each $a\in A$, so $f_{A}$ is not slowly oscillating.

%\smallskip
%\MB Protasov\MB (ii)$\to$(iii) On the other hand, if $\mathcal{B}= X_{\mathcal{I}}$  and $V\in \mathcal{I}$ then $B(x,\alpha)=\{x\}$  for each $x\in X\setminus V$. It follows that every unbounded subset of $X$ is thick.
\smallskip
(iii)$\to$(vi) Assume that $X$ does not satisfy (vi), i.e, $X$ has a non-slowly-oscillating function $f\colon X\to\{0,1\}$. Take a radius $\alpha$ such that, for every bounded $V$, there exists $x\in X\setminus V$ such that $\lvert f( B(x,\alpha))\rvert=2$. Hence $A=\{x\in X\mid\lvert f(B(x,\alpha))\rvert=2\}$ is unbounded. Decompose $A$ as the disjoint union of 
$$
A_0=\{x\in A\mid f(x)=0\}\quad\mbox{and}\quad A_1=\{x\in A\mid f(x)=1\}.
$$
Since $A=A_0\cup A_1$, either $A_0$ or $A_1$ is unbounded. Moreover, for every $x\in A$, both $A_0\cap B(x,\alpha)\neq\emptyset$ and $A_1\cap B(x,\alpha)\neq\emptyset$ and thus $A_0$ and $A_1$ are not thick.  

\end{proof}

\begin{Rm} (i) Let us see that one cannot weaken item (iii) in the above theorem by replacing ``non-thick" by the stronger property ``small". In other words, a ballean need not be thin provided that all its small subsets are bounded. To this end consider the {\em $\omega$-universal} ballean (see \cite[Example 1.4.6]{ProZar}): an infinite countable set $X$, endowed with the radii set 
$$
P=\{f\colon X\to[X]^{<\infty}\mid x\in f(x),\,\{y\in X\mid x\in f(y)\}\in [X]^{<\infty},\,\forall x\in X\},
$$
and $B(x,f)=f(x)$, for every $x\in X$ and $f\in P$. Since it is {\em maximal} (i.e., it is connected, unbounded and every properly finer ballean structure is bounded) by \cite[Example 10.1.1]{ProZar}, then every small subset is finite (by application of \cite[Theorem 10.2.1]{ProZar}), although it is not thin. 
%To get the negative answer, we note [1, Theorem 10.2.1] that in a maximal ballean every small subset is bounded, and take the $\omega$-universal ballean, see Examples 1.4.6 and 10.1.1  in [3].

(ii)  Let $\BBB$ be an unbounded connected ballean and $X$ be its support. Consider the map $CB\colon\BBB^\flat\to\exp\BBB$ such that $CB(A)=X\setminus A$, for every bounded $A$. It is trivial that $C=CB\restriction_X$, where $X$ is identified with the family of all its singletons. Hence, if $CB$ is an asymorphic embedding, then $C$ is an asymorphic embedding too, and thus $\BBB$ is thin, according to Theorem \ref{theo:thin}. However, we claim that $CB$ is not an asymorphic embedding if $\BBB$ is thin and then item (v) in Theorem \ref{theo:thin} cannot be replaced with this stronger property.

Since $\BBB$ is thin, we can assume that $\BBB$ coincides with its satellite $\BBB_\II$ (Theorem \ref{theo:thin}). Fix a radius $V\in\II$ of $\exp X_\II$ and suppose, without loss of generality, that $V$ has at least two elements. For every radius $W\in\II$ of $X_\II^\flat$, pick an element $A_W\in\II$ such that $A_W\subseteq X\setminus(W\cup V)$. Hence, $CB^{-1}(\exp B_\II(CB(A_W),V))\not\subseteq B_\II^\flat(A_W,W)=\{A_W\}$, which implies that $CB$ is not effectively proper. In fact, since $A_W\cup V\in\II$,
$$
\exp B_\II(CB(A_W),V)=\{Z\subseteq X\mid X\setminus(A_W\cup V)\subsetneq Z\subseteq X\setminus A_W\}\subseteq CB(X_\II^\flat),
$$ 
and thus $\lvert \exp B_\II(CB(A_W),V)\cap CB(X_\II^\flat)\rvert>1$. 
\end{Rm}

A characterization of thin (and coarsely thin) balleans in terms of asymptotically isolated balls
can be found in \cite[Theorems 1, 2]{PetPro}. 

\section{Further properties of $\exp(X_{\mathcal{I}})$, $\exp(X_{\mathcal{I}{\mbox{-}ary}})$ and $\BBBB(X,\II)$}\label{sec:3}

Let $f\colon X\to Y$ be a map between sets. Then there is a natural definition for a map $\exp f\colon\mathcal P(X)\to\mathcal P(Y)$, i.e., $\exp f(A)=f(A)$, for every $A\subseteq X$. If $f\colon X\to Y$ is a map between two balleans such that $f(A)\in\flat(Y)$, for every $A\in\flat(Y)$ (e.g., a coarse map), then the restriction $f^\flat=\exp f\restriction_{X^\flat}\colon X^\flat\to Y^\flat$ is well-defined.

The following proposition can be easily proved.
\begin{proposition}\label{prop:morphisms}
	Let $\BBB_X=(X,P_X,B_X)$ and $\BBB_Y=(Y,P_Y,B_Y)$ be two balleans and let $f\colon X\to Y$ be a map between them. Then:
	\begin{enumerate}[(i)]
		\item $f\colon\BBB_X\to\BBB_Y$ is coarse if and only if $\exp f\colon\exp\BBB_X\to\exp\BBB_Y$ is coarse if and only if $f^\flat\colon \BBB_X^\flat\to \BBB_Y^\flat$ is  well-defined and coarse;
		\item $f\colon\BBB_X\to\BBB_Y$ is a coarse embedding if and only if $\exp f\colon\exp\BBB_X\to\exp\BBB_Y$ is a coarse embedding if and only if $f^\flat\colon \BBB_X^\flat\to \BBB_Y^\flat$ is well-defined and a coarse embedding.
		%\item $f\colon\BBB_X\to\BBB_Y$ has large image if and only if $f^\mathcal P\colon\BBB_X^{\mathcal P}\to\BBB_Y^{\mathcal P}$ has large image;
		%\item $f\colon\BBB_X\to\BBB_Y$ is a coarse equivalence if and only if $f^\mathcal P\colon\BBB_X^{\mathcal P}\to\BBB_Y^{\mathcal P}$ is a coarse equivalence.
	\end{enumerate}
\end{proposition}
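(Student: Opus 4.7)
My plan is to prove (i) and (ii) in parallel via a cycle of implications ``$f \Rightarrow \exp f \Rightarrow f^\flat \Rightarrow f$'', observing that (ii) is just (i) together with its effective-properness analogue. For the first implication of (i), I would assume $f$ is coarse and fix $\alpha \in P_X$ with corresponding $\beta \in P_Y$ satisfying $f(B_X(x,\alpha)) \subseteq B_Y(f(x),\beta)$ for all $x$. For any $A \subseteq X$ and any $C \in \exp B_X(A,\alpha)$, the conditions $A \subseteq B_X(C,\alpha)$ and $C \subseteq B_X(A,\alpha)$ yield, after applying $f$, the inclusions $f(A) \subseteq B_Y(f(C),\beta)$ and $f(C) \subseteq B_Y(f(A),\beta)$, so $\exp f(C) \in \exp B_Y(\exp f(A),\beta)$, giving coarseness of $\exp f$.

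For the step ``$\exp f$ coarse $\Rightarrow$ $f^\flat$ well-defined and coarse'', I would first record that coarse maps send bounded sets to bounded sets: if $A \subseteq B_X(y,\alpha)$ holds for every $y \in A$, then choosing $\beta$ as above gives $f(A) \subseteq B_Y(f(y),\beta)$ for every $f(y) \in f(A)$, so $f(A) \in \flat(Y)$. Hence $f^\flat$ is well-defined, and since $\BBB_X^\flat$ is a subballean of $\exp \BBB_X$ by construction, coarseness of $f^\flat$ is inherited from that of $\exp f$. To close the loop with ``$f^\flat$ coarse $\Rightarrow$ $f$ coarse'', I would employ the natural map $\iota \colon X \to \BBB_X^\flat$, $x \mapsto \{x\}$. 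A direct unravelling of \eqref{hyper}, combined with the symmetry axiom of $\BBB_X$, produces $\exp B_X(\{x\},\alpha) \cap \iota(X) = \{\{y\} \colon y \in B_X(x,\alpha)\}$, so $\iota$ is an asymorphism onto its image. Restricting the ball-inclusion satisfied by $f^\flat$ to singletons then recovers the ball-inclusion for $f$.

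Item (ii) follows by running exactly the same three-step cycle with ``effectively proper'' in place of ``coarse''. If $f$ is effectively proper and $\exp f(C) \in \exp B_Y(\exp f(A),\beta)$, then for each $c \in C$ the membership $f(c) \in B_Y(f(A),\beta)$ yields $c \in B_X(A,\alpha)$ for the $\alpha$ given by effective properness, and symmetrically $A \subseteq B_X(C,\alpha)$, giving $C \in \exp B_X(A,\alpha)$. The passage to $f^\flat$ is automatic because $\BBB_X^\flat$ sits inside $\exp \BBB_X$ as a subballean, and the final descent back to $f$ uses the same embedding $\iota$ as above. I do not anticipate a genuine obstacle; the only delicate point is verifying once that $\iota$ preserves the ball structure exactly when intersected with its image, which is the device that lets the singleton-restriction argument transport both coarse and effectively-proper conditions cleanly between $f$, $\exp f$, and $f^\flat$.
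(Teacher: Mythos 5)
Your proof is correct, and it is worth noting that the paper itself offers no argument at all here (it only remarks that the proposition ``can be easily proved''), so there is no authorial proof to compare against; your cycle $f \Rightarrow \exp f \Rightarrow f^\flat \Rightarrow f$ is the natural route and all the key computations check out. In particular, the two devices you isolate are exactly what is needed: applying $f$ to the two defining inclusions $A\subseteq B_X(C,\alpha)$, $C\subseteq B_X(A,\alpha)$ of \eqref{hyper}, using $B(C,\alpha)=\bigcup_{c\in C}B(c,\alpha)$ so that $f(B_X(C,\alpha))\subseteq B_Y(f(C),\beta)$; and the singleton embedding $\iota(x)=\{x\}$, for which the identity $\exp B_X(\{x\},\alpha)\cap\iota(X)=\{\{y\}\mid y\in B_X(x,\alpha)\}$ indeed rests precisely on the symmetry axiom, and which transports both the coarse and the effectively-proper conditions back down from $f^\flat$ (note $\iota(X)\subseteq\BBB_X^\flat$ since singletons are non-empty and bounded). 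One small ordering wrinkle: in the step ``$\exp f$ coarse $\Rightarrow f^\flat$ well-defined'' you justify well-definedness by saying that \emph{coarse maps} send bounded sets to bounded sets, choosing $\beta$ ``as above'' --- but at that point in a strict cycle only the coarseness of $\exp f$ is available, not that of $f$. The repair is immediate and uses your own device: either restrict $\exp f$ to $\iota(X)$ first to conclude that $f$ is coarse, or observe directly that a non-empty $A$ bounded by radius $\alpha$ satisfies $A\in\exp B_X(\{y\},\alpha)$ for any $y\in A$, whence $f(A)\in\exp B_Y(\{f(y)\},\beta)$, i.e.\ $f(A)\subseteq B_Y(f(y),\beta)$, which gives boundedness of $f(A)$ after one application of symmetry and upper multiplicativity. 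With that sentence reordered, the argument is complete, including the effective-properness half of (ii), where your deduction $f(c)\in B_Y(f(a),\beta)\Rightarrow c\in f^{-1}(B_Y(f(a),\beta))\subseteq B_X(a,\alpha)$ correctly handles the passage through unions of balls.
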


For the sake of simplicity, throughout this section, for every ideal $\II$ of a set $X$, the ballean $\BBBB(X,\II)$ will be identified with $\jmath^{-1}(\BBBB(X,\II))$ (where $\jmath$ is defined in Remark \ref{Bool}), whose support is $\mathcal P(X)$. Hence, by this identification, if $A\subseteq X$ and $K\in\II$,
$$
B_\BBBB(A,K)=\{Y\mid A\setminus K\subseteq Y\subseteq A\cup K\}.
$$

\begin{corollary}\label{coro:theo4.1}
	For every ideal $\mathcal{I}$ on $X$, the following statements hold:
	\begin{enumerate}[(i)] 
		\item  $j=\exp id_X\colon\exp X_\mathcal I\to \exp X_{\mathcal I\mbox{-}ary}$ is coarse, but it is not an asymorphism;
		\item $j\colon\exp X_{\II\mbox{-}ary}\to\BBBB(X,\II)$ is coarse, but it is not an asymorphism;
		\item the same holds for the restriction $i=id_X^\flat\colon X_\mathcal I^{\flat}\to X_{\II\mbox{-}ary}^{\flat}$. 
	\end{enumerate}
\end{corollary}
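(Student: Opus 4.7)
The plan is to derive parts (i) and (iii) as immediate consequences of Proposition \ref{ex:idX} combined with Proposition \ref{prop:morphisms}, and to treat (ii) by a direct inspection of the two ball structures.

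For (i), Proposition \ref{ex:idX} tells us that $id_X\colon X_\II\to X_{\II\mbox{-}ary}$ is coarse but not effectively proper; since it is bijective, this is equivalent to being coarse but not a coarse embedding. Proposition \ref{prop:morphisms} then yields that $j=\exp id_X\colon\exp X_\II\to\exp X_{\II\mbox{-}ary}$ is coarse but not a coarse embedding, and since it is the identity on $\mathcal P(X)$, hence bijective, it cannot be an asymorphism. Part (iii) is analogous, but first needs the short verification that $\flat(X_\II)=\flat(X_{\II\mbox{-}ary})=\II$: for both balleans, a subset $A$ with $|A|\geq 2$ is bounded precisely when $A\subseteq K$ for some $K\in\II$, as one reads off the respective ball formulas. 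This ensures $id_X^\flat$ is well-defined and acts as the identity on $\II\setminus\{\emptyset\}$, and the $(\cdot)^\flat$ clause of Proposition \ref{prop:morphisms} concludes the argument exactly as in (i).

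For (ii), a comparison of \eqref{14sett*} and \eqref{15sett} shows that $\exp B_{\II\mbox{-}ary}(A,K)\subseteq B_\BBBB(A,K)$ for every $A\subseteq X$ and every $K\in\II$, so the map $j$ (i.e., the identity on $\mathcal P(X)$ under the convention set just before the corollary) is coarse. The failure of effective properness is witnessed at the empty set: in $\exp X_{\II\mbox{-}ary}$, $\emptyset$ is isolated, with $\exp B_{\II\mbox{-}ary}(\emptyset,K')=\{\emptyset\}$ for every $K'\in\II$, whereas $B_\BBBB(\emptyset,K)=\mathcal P(K)$ contains non-empty sets as soon as $K\neq\emptyset$. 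Hence no $K'\in\II$ can force $j^{-1}(B_\BBBB(\emptyset,K))\subseteq\exp B_{\II\mbox{-}ary}(\emptyset,K')$, so $j$ is not effectively proper, and bijectivity then rules out its being an asymorphism.

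There is essentially no hard step in this proof: parts (i) and (iii) are formal consequences of results stated just above, while (ii) hinges entirely on the single ``isolated point'' discrepancy at the empty set already flagged in Remark \ref{Bool}. The only subtlety is in (iii), where one has to certify that $\flat(X_\II)$ and $\flat(X_{\II\mbox{-}ary})$ coincide so that $id_X^\flat$ makes sense; but this reduces to a one-line computation directly from the ball formulas.
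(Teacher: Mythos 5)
Your proof is correct and follows essentially the same route as the paper: parts (i) and (iii) are obtained exactly as in the paper by combining Proposition \ref{ex:idX} with Proposition \ref{prop:morphisms} (your explicit check that $\flat(X_\II)=\flat(X_{\II\mbox{-}ary})=\II$ is a harmless elaboration of the well-definedness clause), and part (ii) rests on the same two facts the paper invokes, namely that the ball structures agree on $\mathcal P(X)\setminus\{\emptyset\}$ while $\emptyset$ is isolated in $\exp X_{\II\mbox{-}ary}$ but not in $\BBBB(X,\II)$ (where $\QQ_{\BBBB(X,\II)}(\emptyset)=\II$), your effective-properness computation at $\emptyset$ being just a more explicit rendering of that discrepancy. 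No gaps.
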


\begin{proof} 
	Since $id_X\colon X_\II\to X_{\II\mbox{-}ary}$ is coarse, but it is not effectively proper (Proposition \ref{ex:idX}), items (i) and (iii) follow from Propositions \ref{prop:morphisms}. Item (ii) descends from the fact that  $\exp(X_{\II\mbox{-}ary})\restriction_{\mathcal P(X)\setminus\{\emptyset\}}=\BBBB(X,\II)\restriction_{\mathcal P(X)\setminus\{\emptyset\}}$, and $\QQ_{\exp X_{\II\mbox{-}ary}}(\emptyset)=\{\emptyset\}$, while $\QQ_{\BBBB(X,\II)}(\emptyset)=\II$.
\end{proof}

In spite of Corollary \ref{coro:theo4.1}, we show now that a cofinal part of $\exp(X_\II)$ asymorphically embeds in $\exp(X_{\II\mbox{-}ary})$.
%\footnote{$X_\mathcal I^\flat$ asymorphically embeds in $K(X,\mathcal I)$ under $i$.}

For every ideal $\mathcal I$ on $X$ and $x\in X$ consider the families $\mathcal U_x=\{U\subseteq X\mid x\in U\}$, the principal ultrafilter of $\mathcal P(X)$ generated by $\{x\}$, and $\II_x=\mathcal U_x\cap\II=\{F\in\II\mid x\in F\}$.

\begin{Th}\label{theo:3.3} For every ideal $\mathcal{I}$ on $X$ and $x\in X$, the following statements hold:
	\begin{enumerate}[(i)] 
		\item if $j\colon\exp(X_\II)\to\exp(X_{\II\mbox{-}ary})$ is the map defined in Corollary \ref{coro:theo4.1}(i), its restriction $j\restriction_{\mathcal U_x}$ is an asymorphism between the corresponding subballeans;
		%\item the restriction $i_{x}=i|_{\mathcal I_x}$ is an asymorphism between the corresponding subballeans of $X_{\mathcal{I}}^{\flat}$ and $K(X, \mathcal{I})$;
		%(ii) {\it the restriction $i_{x}: \mathcal{I}_{x} \longrightarrow S_{x} $ of $i$ to  $\mathcal{I}_{x}$ is an asymorphism between corresponding subballeans of $X_{\mathcal{I}}^{\flat}$ and $K(X, \mathcal{I})$ };
		\item $\BBBB(X,\II)$ and, in particular, $\exp^\ast(X_{\mathcal{I}{\mbox{-}ary}})$ are coarsely equivalent to the subballean of $\exp(X_{\mathcal{I}})$ with support $\mathcal{U}_{x}$, witnessed by the map  
%		this is witnessed by the coarse equivalence\MB\MB
		$$
		j\restriction_{\mathcal U_x}\colon \exp(X_\mathcal I)\restriction_{\mathcal U_x}\to\exp^\ast(X_{\II\mbox{-}ary})\subseteq\BBBB(X,\II). 
		$$
		%\item $K(X, \mathcal{I})$  is coarsely equivalent to the subballean of $X_{\mathcal{I}}^{\flat}$ with support $\mathcal{I}_{x}$\MB, and $i|_{\mathcal I_x}\colon X_\mathcal I^\flat|_{\mathcal I_x}\to K(X,\mathcal I)$ is the coarse equivalence witnessing this fact.
	\end{enumerate}
\end{Th}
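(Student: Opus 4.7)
The plan is to deduce part (i) by applying Remark \ref{rem:asy}(i) to $j\restriction_{\mathcal U_x}$ with the common cofinal subset of radii $\II_x := \{K\in\II : x\in K\}\subseteq \II$ and the identity bijection, and then to derive part (ii) by composing with the asymorphic embedding $\exp^\ast(X_{\II\mbox{-}ary})\hookrightarrow \BBBB(X,\II)$ from \eqref{16sett}, together with a direct largeness argument.

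For (i), one first checks that $\II_x$ is cofinal in $(\II,\subseteq)$: since $\mathfrak F_X\subseteq\II$, any $K\in\II$ satisfies $K\subseteq K\cup\{x\}\in\II_x$. As $\subseteq$ coincides with both preorders $\leq_{\exp X_\II}$ and $\leq_{\exp X_{\II\mbox{-}ary}}$ on $\II$, the family $\II_x$ is cofinal for both hyperballeans. With $\psi=\mathrm{id}_{\II_x}$, the hypothesis of Remark \ref{rem:asy}(i) reduces to the key identity
$$
\exp B_\II(A,K)\cap\mathcal U_x \;=\; \exp B_{\II\mbox{-}ary}(A,K)\cap\mathcal U_x \qquad \text{for all } A\in\mathcal U_x,\ K\in\II_x.
$$
For such $A,K$ one has $x\in A\cap K$, so both \eqref{14sett} and \eqref{14sett*} apply in their nontrivial branches. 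Intersecting each with $\mathcal U_x$ and using that $x\in K$ forces $x\notin A\setminus K$, both sides collapse to $\{Z : A\setminus K\subseteq Z\subseteq A\cup K,\ x\in Z\}$: the additional constraint $x\in Z$ automatically promotes $A\setminus K\subseteq Z$ to the strict inclusion demanded by \eqref{14sett} and rules out $Z=\emptyset$ as required by \eqref{14sett*}.

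For (ii), composing the asymorphism of (i) with the asymorphic embedding \eqref{16sett} yields an asymorphic embedding of $\exp(X_\II)\restriction_{\mathcal U_x}$ into both $\exp^\ast(X_{\II\mbox{-}ary})$ and $\BBBB(X,\II)$, with image $\mathcal U_x$. To upgrade these embeddings to coarse equivalences, it suffices to verify that $\mathcal U_x$ is large in each codomain. Using the singleton radius $\{x\}\in\II$, for every $A\subseteq X$ the set $A\cup\{x\}\in\mathcal U_x$ satisfies $A\in B_\BBBB(A\cup\{x\},\{x\})$, since $(A\cup\{x\})\setminus\{x\}\subseteq A\subseteq A\cup\{x\}$. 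Hence $B_\BBBB(\mathcal U_x,\{x\})=\mathcal P(X)$, which gives largeness in $\BBBB(X,\II)$; the same witness, restricted to $\mathcal P(X)\setminus\{\emptyset\}$, handles $\exp^\ast(X_{\II\mbox{-}ary})$.

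The main obstacle is reconciling the strict inclusion $A\setminus K\subsetneq Z$ of \eqref{14sett} with the loose inclusion of \eqref{14sett*}; this very discrepancy is what prevents the full $j$ from being an asymorphism on $\exp(X_\II)$ (Corollary \ref{coro:theo4.1}(i)). The remedy is precisely the joint restriction to $\mathcal U_x$ on the source and to $\II_x$ on the radii: once $x$ is required to lie in both $Z$ and $K$, the two inclusion conditions become automatically equivalent, and the remaining verifications are routine.
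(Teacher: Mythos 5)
Your proposal is correct and takes essentially the same route as the paper: part (i) is precisely the paper's application of Remark \ref{rem:asy}(i) with the cofinal radii family $\mathcal I_x$ and the observation that the traces of the two balls on $\mathcal U_x$ coincide (the membership $x\in Z$ making the strict inclusion in \eqref{14sett} and the nonemptiness in \eqref{14sett*} automatic), while part (ii) matches the paper's largeness argument using the radius $\{x\}\in\II$ together with the identification \eqref{16sett}. Your explicit checks of the cofinality of $\mathcal I_x$ and of largeness in $\exp^\ast(X_{\II\mbox{-}ary})$ are details the paper leaves implicit, but they are the same argument.
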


\begin{proof} (i) For every $C\in\mathcal U_x$ and $A\in\mathcal I_x$, we have
	$$
	\exp B_\II(C,A)\cap\mathcal U_x=\{(C\setminus A)\cup Y\mid x\in Y\subseteq A\} =\exp B_{\II\mbox{-}ary}(C,A)\cap\mathcal U_x,
	$$
	since $C\cap A\neq\emptyset$. Hence the conclusion follows by Remark \ref{rem:asy}(i), since $\II_x$ is a cofinal subset of radii of $\II$.
	
	(ii) In view of item (i), it remains to see that $j(\mathcal U_x)$ is large in $\BBBB(X,\II)$. %$\exp(X_{\II\mbox{-}ary})$.
	Indeed, for every $A\in\mathcal U_x$, $B_{\BBBB}(A,\{x\})=\{A,A\setminus\{x\}\}$, and so $B_{\BBBB}(j(\mathcal U_x),\{x\})=\BBBB(X,\II)$, where $\{x\}\in\II$. 
\end{proof}

Since $X_\II^\flat$, $X_{\II\mbox{-}ary}^\flat$, and $\K(X,\II)$ are subballeans of $\exp(X_\II)$, $\exp(X_{\II\mbox{-}ary})$, and $\BBBB(X,\II)$ respectively, 
by taking by restrictions we obtain the following immediate corollary.

\begin{corollary}
	For every ideal $\mathcal{I}$ on $X$ and $x\in X$, the following statements hold:
	\begin{enumerate}[(i)]
		\item $j\restriction_{\mathcal I_x}$ is an asymorphism between the corresponding subballeans of $X_\II^\flat$ and $X_{\II\mbox{-}ary}^\flat$;
		\item $\K(X,\II)$ and, in particular, $X_{\mathcal{I}\mbox{-}ary}^\flat$ are coarsely equivalent to the subballean of $X_{\mathcal{I}}^\flat$ with support $\mathcal{I}_{x}$, witnessed by the map $j\restriction_{\mathcal I_x}\colon X_\mathcal I^\flat\restriction_{\mathcal I_x}\to X_{\II\mbox{-}ary}^\flat\subseteq\K(X,\II)$.
		% is the coarse equivalence witnessing this fact.
	\end{enumerate}
\end{corollary}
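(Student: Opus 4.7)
The plan is to obtain both statements as restrictions of Theorem \ref{theo:3.3} to the subfamily $\II_x$, so the only real work is bookkeeping about supports and, for item (ii), a short additional check of largeness.

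First I would verify that $\flat(X_\II) = \flat(X_{\II\mbox{-}ary}) = \II$. This is immediate from the two ball formulas: in either ballean a non-empty bounded set must lie in some $A\in\II$, and conversely each $A\in\II$ is bounded by taking itself as radius. Consequently, $X_\II^\flat$ and $X_{\II\mbox{-}ary}^\flat$ share the common support $\II\setminus\{\emptyset\}$, and since $x\notin\emptyset$ we have $\II_x = \II\cap\mathcal U_x\subseteq \mathcal U_x$.

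For item (i), Theorem \ref{theo:3.3}(i) asserts that $j\restriction_{\mathcal U_x}$ is an asymorphism between $\exp(X_\II)\restriction_{\mathcal U_x}$ and $\exp(X_{\II\mbox{-}ary})\restriction_{\mathcal U_x}$. Passing to the subballeans with support $\II_x\subseteq \mathcal U_x$ on both sides and noting that the subballean structures are inherited consistently from $\exp(X_\II)$ and $\exp(X_{\II\mbox{-}ary})$ into $X_\II^\flat$ and $X_{\II\mbox{-}ary}^\flat$ yields at once the asymorphism claimed in (i).

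For item (ii), applying (i) already gives that $j\restriction_{\II_x}$ is an asymorphic, hence coarse, embedding of $X_\II^\flat\restriction_{\II_x}$ into $X_{\II\mbox{-}ary}^\flat$, and therefore into the larger ballean $\K(X,\II)\supseteq X_{\II\mbox{-}ary}^\flat$. The only step I expect to require even minor attention -- and so the main obstacle, modest as it is -- is to verify that $j(\II_x)=\II_x$ is large in $\K(X,\II)$ rather than merely in $X_{\II\mbox{-}ary}^\flat$, since $\K(X,\II)$ contains the extra point $\emptyset\notin X_{\II\mbox{-}ary}^\flat$ that must also be reached by a fixed ball around $\II_x$. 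Taking radius $\{x\}\in\II$ and using the ball formula for $\BBBB(X,\II)$, for every $F\in\II$ with $x\notin F$ one has $F\cup\{x\}\in\II_x$ and $B_\BBBB(F\cup\{x\},\{x\})=\{F,F\cup\{x\}\}\ni F$; for $F\in\II$ with $x\in F$ we have $F\in\II_x$ directly. In particular, setting $F=\emptyset$, the extra point $\emptyset$ is caught by $B_\BBBB(\{x\},\{x\})$. Hence $B_\BBBB(\II_x,\{x\})=\II$, which is the whole support of $\K(X,\II)$, establishing the required coarse equivalence.
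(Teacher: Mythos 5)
Your proposal is correct and matches the paper's route: the paper obtains this corollary as an immediate restriction of Theorem \ref{theo:3.3} to the subballeans $X_\II^\flat$, $X_{\II\mbox{-}ary}^\flat$ and $\K(X,\II)$, exactly as you do. Your extra verification that $B_\BBBB(\II_x,\{x\})=\II$ (so that $j(\II_x)$ is large in $\K(X,\II)$, including catching the point $\emptyset$) is just the radius-$\{x\}$ computation from the proof of Theorem \ref{theo:3.3}(ii) rerun inside $\K(X,\II)$ --- a detail the paper leaves implicit, correctly spelled out.
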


\subsection{$\BBBB(\kappa,\mathcal K)$ and the hyperballeans $\exp(\kappa_{\mathcal K})$ and $\exp(\kappa_{\mathcal K\mbox{-}ary})$}\label{subsec:cubes}

Now we focus our study on some more specific ideals. For an infinite cardinal $\kappa$ and for its ideal
$$
\mathcal{K}:= [\kappa]^{<\kappa}=\{Z\subset\kappa\mid\lvert Z \rvert<\kappa \},
$$ 
consider the two balleans $\kappa_{\mathcal K}$ and $\kappa_{\mathcal K\mbox{-}ary}$. Here we investigate some relationships between hyperballeans of those two balleans and the ballean $\BBBB(\kappa,\mathcal K)$. 

Furthermore, with $\kappa$ as above, if $x<\kappa$, put 
$$
\mathcal U_{\ge x}=\{A\subseteq\kappa\mid \min A=x\}\quad\mbox{and}\quad\mathcal{K}_{\geq x} =\mathcal U_{\ge x}\cap\mathcal K=\{A\in \mathcal{K}\mid\min A=x\}.
$$ 
For every pair of ordinals $\alpha\leq\beta<\kappa$, let $[\alpha,\beta] = \{\gamma\in \kappa\mid \alpha\leq\gamma\leq\beta\}$. Clearly, the cardinal $\kappa$ is regular if and only if the family $P_{int}=\{[0,\alpha]\mid\alpha<\kappa\}$ is cofinal in $\mathcal K$. For a ballean $\mathcal{B}=(X,P,B)$, two subsets $A,B$ of $X$ are called {\em close} if $A$ and $B$ are in the same connected component of $\exp\mathcal{B}$.

\begin{Th}\label{theo:Kcubes} Let $\kappa$ be an infinite cardinal and $x<\kappa$. Then: 
	\begin{enumerate}[(i)]
		\item  the subballean $\mathcal{U}_{\geq x}$ of $\exp(\kappa_{\mathcal{K}})$ is asymorphic to $\BBBB(\kappa,\mathcal K)$, so $\exp^\ast(\kappa_{\mathcal{K}})$ is the disjoint union of $\kappa$ pairwise close copies of $\BBBB(\kappa,\mathcal K)$;
		\item if $\kappa$ is regular, then $\exp(\kappa_{\mathcal K})$ asymorphically embeds into $\exp(\kappa_{\mathcal K\mbox{-}ary})$.
	\end{enumerate}
\end{Th}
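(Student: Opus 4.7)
The plan is to exhibit explicit bijections and invoke Remark~\ref{rem:asy}(i) in both parts. For (i), every $A \in \mathcal U_{\geq x}$ has a canonical decomposition $A = \{x\} \cup A^\ast$ with $A^\ast \subseteq (x,\kappa)$. Fix any bijection $\phi \colon (x,\kappa) \to \kappa$ (one exists since $|(x,\kappa)| = \kappa$) and define $\Psi \colon \BBBB(\kappa,\mathcal K) \to \mathcal U_{\geq x}$ by $\Psi(E) = \{x\} \cup \phi^{-1}(E)$, a bijection. To apply Remark~\ref{rem:asy}(i) I would first check that $\mathcal K_{\geq x}$ is cofinal in $\mathcal K$ relative to the subballean preorder on $\mathcal U_{\geq x}$: for $K \in \mathcal K$, the set $K' := (K \cap [x,\kappa)) \cup \{x\}$ lies in $\mathcal K_{\geq x}$, and a direct comparison using~(\ref{14sett}) shows $\exp B_\II(A,K) \cap \mathcal U_{\geq x} \subseteq \exp B_\II(A,K') \cap \mathcal U_{\geq x}$ for every $A \in \mathcal U_{\geq x}$. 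The bijection $\psi \colon \mathcal K \to \mathcal K_{\geq x}$, $L \mapsto \{x\} \cup \phi^{-1}(L)$, then transports balls correctly: for $A = \Psi(E)$ and $K = \psi(L)$ a short computation from~(\ref{14sett}) yields
\[
\exp B_\II(A,K) \cap \mathcal U_{\geq x} = \bigl\{(A \setminus K) \cup Y : x \in Y \subseteq K\bigr\} = \Psi\bigl(\{F : E \setminus L \subseteq F \subseteq E \cup L\}\bigr) = \Psi(B_\BBBB(E,L)).
\]
The ``disjoint union'' and ``pairwise close'' consequences are then immediate: every non-empty subset of $\kappa$ has a unique minimum (giving the partition $\exp^\ast(\kappa_{\mathcal K}) = \bigsqcup_{x<\kappa} \mathcal U_{\geq x}$), and for $x \neq y$ the singletons $\{x\},\{y\}$ lie in a common ball of radius $\{x,y\} \in \mathcal K$ in $\exp(\kappa_{\mathcal K})$.

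For (ii), the naive identity fails effective properness (as already observed in Corollary~\ref{coro:theo4.1}(i)): when $A \cap K = \emptyset$ the source ball collapses to $\{A\}$, while the target ary-ball contains all proper supersets of $A$ inside $A \cup K$. I would fix this by equipping each set with an injective marker based on $\min A$. Fix a bijection $\tau \colon \kappa \to \kappa \sqcup \kappa$ (available since $|\kappa \sqcup \kappa| = \kappa$) and define $\Phi(\emptyset) := \emptyset$ and, for $A \neq \emptyset$,
\[
\Phi(A) := \tau^{-1}\bigl( (A \times \{0\}) \cup (\{\min A\} \times \{1\}) \bigr).
\]
Injectivity is immediate since $A$ is recovered from the projection of $\tau(\Phi(A))$ to the $\{0\}$-coordinate. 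Using the regularity of $\kappa$, the family $P_{int} = \{[0,\alpha] : \alpha < \kappa\}$ is cofinal in $\mathcal K$, and I would pair the source radius $[0,\alpha]$ with the target radius $K' := \tau^{-1}([0,\alpha] \times \{0,1\})$.

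Coarseness then holds because any $B \in \exp B_\II(A, [0,\alpha])$ with $A \cap [0,\alpha] \neq \emptyset$ must also satisfy $B \cap [0,\alpha] \neq \emptyset$, hence $\min B \leq \alpha$; thus the marker coordinate of $\Phi(B)$ lies in $[0,\alpha] \times \{1\}$, while $A \triangle B \subseteq [0,\alpha]$ controls the $\{0\}$-coordinate (the case $A \cap [0,\alpha] = \emptyset$ is trivial since then the source ball is $\{A\}$). Effective properness is the crucial step: if $\Phi(B) \in \exp B_{\II\mbox{-}ary}(\Phi(A), K')$, one obtains $A \triangle B \subseteq [0,\alpha]$ together with either $\min A = \min B$ or $\{\min A,\min B\} \subseteq [0,\alpha]$. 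When $\min A > \alpha$, the first alternative is forced, and combined with $A \triangle B \subseteq [0,\alpha]$ it yields $B = A$, matching the source ball $\{A\}$. When $\min A \leq \alpha$, the constraints reduce to $A \triangle B \subseteq [0,\alpha]$ and $\min B \leq \alpha$, exactly describing $\exp B_\II(A, [0,\alpha])$. I expect this effective-properness step to be the main obstacle: the marker is precisely what turns the problematic collapse ($A \cap K = \emptyset$ case) into a sharp $\min$-separation argument, and the regularity of $\kappa$ is essential because only then do the intervals $[0,\alpha]$ form a cofinal family exhibiting the clean equivalence $\min A > \alpha \Leftrightarrow A \cap [0,\alpha] = \emptyset$.
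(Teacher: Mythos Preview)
Your argument is correct and follows essentially the same strategy as the paper's proof.

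For (i) the constructions are literally the same up to notation: the paper fixes a bijection $g\colon\kappa\to\{\alpha\mid x<\alpha<\kappa\}$ and sends $X\mapsto g(X)\cup\{x\}$, which is exactly your $\Psi$ with $g=\phi^{-1}$. Your verification of the ball identity and of the cofinality of $\mathcal K_{\geq x}$ in the subballean matches the paper's use of Remark~\ref{rem:asy}(ii),(iii).

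For (ii) the underlying idea is identical---double $\kappa$, embed the set in one copy and place a marker encoding $\min A$ in the other---but the implementations differ cosmetically. The paper uses the explicit odd/even partition of $\kappa$ with the monotone bijection $\varphi\colon\kappa\to E$, and the marker is the even ordinal $y_F$ satisfying $y_F+1=\min\varphi(F)$. You instead use an arbitrary bijection $\tau\colon\kappa\to\kappa\sqcup\kappa$ and place the marker $(\min A,1)$ directly. Your effective-properness analysis (the case split on $\min A>\alpha$ versus $\min A\leq\alpha$) is the abstract version of the paper's claim~(\ref{eq:step1}); both isolate the same dichotomy that makes the marker trick work. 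One small point you leave implicit: to apply Remark~\ref{rem:asy}(i) you also need the target family $\{\tau^{-1}([0,\alpha]\times\{0,1\})\mid\alpha<\kappa\}$ to be cofinal in $\mathcal K$, not just $P_{int}$ on the source side. This follows from regularity (for any $L\in\mathcal K$ the two projections of $\tau(L)$ are each of size $<\kappa$, hence bounded by some $\alpha$), but it should be stated.
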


\begin{proof}  
	(i) Fix a bijection $g\colon\kappa\to\mathcal A$, where $\mathcal A$ is the family of all ordinals $\alpha$ such that $x<\alpha<\kappa$. Define a map $f\colon\BBBB(\kappa,\mathcal K)\to\mathcal U_{\ge x}\subseteq\exp(\kappa_{\mathcal K})$ such that, for every $X\subseteq\kappa$, $f(X)=g(X)\cup\{x\}$. We claim that $f$ is the desired asymorphism. In order to prove it, we want to apply Remark \ref{rem:asy}(ii). Fix a radius $K\in\mathcal K$ (i.e., $\lvert K\rvert<\kappa$). Then, for every $X\subseteq\kappa$,
	\begin{equation*}%\label{eq:gex}
	\begin{aligned}
	f(B_{\BBBB}(X,K))&\,=f(\{Y\subseteq\kappa\mid X\setminus K\subseteq Y\subseteq X\cup K\})=\\
	&\,=\{f(g^{-1}(Z))\mid X\setminus K\subseteq g^{-1}(Z)\subseteq X\cup K\}=\\
	&\,=\{Z\cup\{x\}\mid g(X)\setminus g(K)\subseteq Z\subseteq g(X)\cup g(K)\}=\\
	&\,=\{W\in\mathcal U_{\ge x}\mid(g(X)\cup\{x\})\setminus(g(K)\cup\{x\})\subsetneq W\subseteq\\
	&\,\quad\quad\quad\quad \subseteq (g(X)\cup\{x\})\cup(g(K)\cup\{x\})\}=\\
	&\,=\exp B_\mathcal K(g(X)\cup\{x\},g(K)\cup\{x\})\cap\mathcal U_{\ge x}=\exp B_\mathcal K(f(X),g(K)\cup\{x\})\cap\mathcal U_{\ge x}.
	\end{aligned}
	\end{equation*}
	If we show that $\{g(K)\cup\{x\}\mid K\in\mathcal K\}$ is cofinal in $f(\kappa)=\mathcal U_{\ge x}$, then the conclusion follows, since we can apply Remark \ref{rem:asy}(ii) by putting $\psi(K)=g(K)\cup\{x\}$, for every $K\in\mathcal K$. It is enough to check that, for every $X\subseteq\kappa$ and $K\in\mathcal K$,
	$$
	\exp B_{\mathcal K}(X,K)\cap\mathcal U_{\ge x}=\exp B_{\mathcal K}(X,\psi(K))\cap\mathcal U_{\ge x},
	$$
	which proves the cofinality of $\psi(\mathcal K)$ in $f(\kappa)$, in virtue of Remark \ref{rem:asy}(iii).
	
	The last assertion of item (i) follows from the facts that the family $\{\mathcal{U}_{\geq x}\mid x<\kappa\}$ is a partition of  $\exp(\kappa_{\mathcal{K}})$, and, for every $x<y<\kappa$, $\mathcal U_{\ge x}\in\exp B_\mathcal K(\mathcal U_{\ge y},[x,y])$, where $[x,y]\in\mathcal K$.
	
	(ii) Every ordinal $\alpha\in\kappa$ can be written uniquely as $\alpha=\beta+n$, where $\beta$ is a limit ordinal and $n$  is a natural number. We say that $\alpha$ is {\em even} ({\em odd}) if $n$  is even (odd). We denote by $E$ the set of all odd ordinals from $\kappa$ and fix a monotonically increasing bijection $\varphi\colon\kappa\to E$. For each non-empty $F\subseteq\kappa$, let $y_F\in\kappa$ such that $y_F+1=\min\varphi(F)$ and define $f(F)=\{y_F\}\cup\varphi(F)$. Moreover, we set $f(\emptyset)=\emptyset$. Let $S=f(\exp(\kappa_{\mathcal K}))$. Hence the elements of $S$ are the empty set and those subsets $A$ of $\kappa$, consisting of odd ordinals and precisely one even ordinal $\alpha\in A$ such that $\alpha= \min A$.
	
	We claim that $f\colon \exp(\kappa_{\mathcal K})\to S$ is an asymorphism. 
	
	Since $\kappa$ is regular, $P_{int}\subseteq\mathcal K$ is a cofinal subset of radii. Now fix $[0,\alpha]\in P_{int}$. Take an arbitrary subset $A$ of $\kappa$. We can assume $A$ to be non-empty, since in that case, there is nothing to be proved. The thesis follows, once we prove that
	\begin{equation}\label{eq:theo4.2}
	f(\exp B_\mathcal K(A,[0,\alpha]))=\exp B_{\mathcal K\mbox{-}ary}(f(A),[0,\varphi(\alpha)])\cap S,
	\end{equation}
	since we can apply Remark \ref{rem:asy}(i) if we define the bijection $\psi([0,\beta])=[0,\varphi(\beta)]$, for every $\beta<\kappa$, between cofinal subsets of radii.
	
	If $A\cap [0,\alpha]=\emptyset$, then also $f(A)$ and $[0,\varphi(\alpha)]$ are disjoint, which implies that 
	$$
	\exp B_{\mathcal K\mbox{-}ary}(f(A),[0,\varphi(\alpha)])\cap S=\{f(A)\}.
	$$
	
	Otherwise, suppose that $A$ and $[0,\alpha]$ are not disjoint. In particular $y_A\in[0,\varphi(\alpha)]$. We divide the proof of \eqref{eq:theo4.2} in this case in some steps.
	
	First of all we claim that, for every $\emptyset\neq Z\subseteq \kappa$,
	\begin{equation}\label{eq:step1}
	\mbox{if $A\setminus[0,\alpha]\subseteq Z\subseteq A\cup[0,\alpha]$, then: $Z\neq A\setminus[0,\alpha]$ if and only if $y_Z\leq\varphi(\alpha)$}. 
	\end{equation}
	In fact, if $Z=A\setminus[0,\alpha]$, then $\min Z>\alpha$ and so $\min\varphi(Z)>\varphi(\alpha)$. Since $\varphi(\alpha)\in E$, $\varphi(Z)\subseteq E$, and $y_Z\notin E$, we have that $y_Z>\varphi(\alpha)$. Conversely, if $Z\neq A\setminus[0,\alpha]$, there exists $z\in Z\cap[0,\alpha]$, since $Z\subseteq A\cup[0,\alpha]$. Hence $\min Z\leq z\leq\alpha$ and thus $y_Z<\min\varphi(Z)\leq\varphi(\alpha)$.
	
	Fix now a subset $Z\subseteq\kappa$. If $f(Z)\in\exp B_{\mathcal K\mbox{-}ary}(f(A),[0,\varphi(\alpha)])$, then, by applying the definitions,
	$$
	\varphi(A)\setminus[0,\varphi(\alpha)]=f(A)\setminus[0,\varphi(\alpha)]\subseteq\varphi(Z)\cup\{y_Z\}\subseteq f(A)\cup[0,\varphi(\alpha)]=\varphi(A)\cup[0,\varphi(A)].
	$$
	Note that $\varphi(Z)\subseteq E$ and $y_Z\notin E$. Hence $$
	\varphi(A\setminus[0,\alpha])=\varphi(A)\setminus\varphi([0,\alpha])\subseteq\varphi(Z)\subseteq\varphi(A)\cup\varphi([0,\alpha])=\varphi(A\cup[0,\alpha])\quad\text{and}\quad y_Z\leq\varphi(\alpha).
	$$
	Since $\varphi$ is a bijection, we can apply \eqref{eq:step1} and obtain that $A\setminus[0,\alpha]\subsetneq Z\subseteq A\cup[0,\alpha]$, which means that $Z\in\exp B_\mathcal K(A,[0,\alpha])$. Hence we have proved the inclusion $(\supseteq)$ of \eqref{eq:theo4.2}. Since all the previous implications can be reverted, then \eqref{eq:theo4.2} finally follows.
\end{proof}

\begin{corollary}
	Let $\kappa$ be an infinite cardinal and $x<\kappa$. Then:
	\begin{enumerate}[(i)]
		\item the subballean $\mathcal{K}_{\geq x}$ of $\kappa_{\mathcal{K}}^{\flat}$ is asymorphic to $\K(\kappa,\mathcal K)$, so $\kappa_{\mathcal{K}}^{\flat}$ is the disjoint union of $\kappa$ pairwise close $\mathcal K$-macrocubes $\K(\kappa,\mathcal K)$;
		\item  if $\kappa$  is regular then $\kappa_{\mathcal{K}}^{\flat}$ asymorphically embeds into $\kappa_{\mathcal{K}\mbox{-}ary}^\flat$. 
	\end{enumerate}
\end{corollary}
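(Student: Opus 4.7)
The plan is to obtain both items by restricting the explicit maps constructed in the proof of Theorem \ref{theo:Kcubes} to the subballeans of bounded subsets, and to verify that the cardinality conditions are preserved. The key observation is that the non-empty bounded subsets of $\kappa_{\mathcal K}$ and $\kappa_{\mathcal K\mbox{-}ary}$ are exactly the non-empty elements of $\mathcal K$, while $\K(\kappa,\mathcal K)$ has support $\mathcal K$ via the identification $\jmath$ of Remark \ref{Bool}. So the corollary reduces to a purely size-theoretic check on top of the main theorem.

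For (i), I take the asymorphism $f\colon \BBBB(\kappa,\mathcal K)\to\mathcal U_{\geq x}$ of Theorem \ref{theo:Kcubes}(i), given by $f(X)=g(X)\cup\{x\}$. Since $g$ is a bijection and $x\notin g(X)$, we get $|f(X)|=|X|+1$, so $|X|<\kappa$ if and only if $|f(X)|<\kappa$. Therefore $f$ restricts to a bijection $\K(\kappa,\mathcal K)\to \mathcal K_{\geq x}$ which inherits the asymorphism property from $f$. The family $\{\mathcal K_{\geq x}\mid x<\kappa\}$ partitions $\kappa_{\mathcal K}^{\flat}$, since every non-empty subset of $\kappa$ has a unique minimum. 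Pairwise closeness of these copies inside $\exp(\kappa_{\mathcal K}^{\flat})$ is inherited verbatim from Theorem \ref{theo:Kcubes}(i): for $x<y<\kappa$ and any $A\in\mathcal K_{\geq y}$, the set $A\cup\{x\}$ lies in $\mathcal K_{\geq x}\cap\exp B_{\mathcal K}(A,[x,y])$, with $[x,y]\in\mathcal K$.

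For (ii), I restrict the asymorphic embedding $f\colon \exp(\kappa_{\mathcal K})\to\exp(\kappa_{\mathcal K\mbox{-}ary})$ of Theorem \ref{theo:Kcubes}(ii), with $f(F)=\{y_F\}\cup\varphi(F)$ on non-empty $F$. Because $\varphi$ is a bijection and $y_F\notin\varphi(F)\subseteq E$, one has $|f(F)|=|F|+1$, so $F\in\mathcal K\setminus\{\emptyset\}$ forces $f(F)\in\mathcal K\setminus\{\emptyset\}$. Hence $f$ restricts to a well-defined map $\kappa_{\mathcal K}^{\flat}\to \kappa_{\mathcal K\mbox{-}ary}^{\flat}$, and by Remark \ref{rem:asy}(ii) the restriction is still an asymorphic embedding. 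No genuine obstacle arises: once Theorem \ref{theo:Kcubes} is granted, the whole corollary is cardinality bookkeeping, and the only conceptual point is that both explicit maps of the main theorem are size-preserving up to adding a single element.
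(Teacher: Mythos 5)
Your proposal is correct and matches the paper's intent exactly: the paper states this as an immediate corollary obtained by restricting the maps of Theorem \ref{theo:Kcubes} to the subballeans of non-empty bounded sets (which in both $\kappa_{\mathcal K}$ and $\kappa_{\mathcal K\mbox{-}ary}$ are precisely the non-empty members of $\mathcal K$), and gives no further argument. Your cardinality bookkeeping ($\lvert f(X)\rvert=\lvert X\rvert+1$, so boundedness is preserved in both directions), the observation that $f(\emptyset)=\{x\}$ lands correctly in $\mathcal K_{\geq x}$, and the restricted closeness witness $A\cup\{x\}\in\exp B_{\mathcal K}(A,[x,y])$ supply exactly the details the paper leaves implicit.
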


The proof of item (ii), specified for $\kappa=\omega$, can be found in \cite{ProPro}.
%zzzzzzzzzzzzzzzzzzzzzzzzzzzzzzzzzzzzzzzzzzzzzzzzzzzzzzzzzzzzzzzzzzzz  section 5

\section{The number of connected components of $\exp (X_{\mathcal{I}})$}\label{sec:dsc}
% and $\exp(X_{\mathcal{I}{\mbox{-}ary}})$}

Recall that  $\dsc (\mathcal{B})$ denotes the number of connected components of a ballean $\mathcal{B}$.
Clearly, 
\begin{equation}\label{cccc}
\dsc(\exp(\BBB))= \dsc(\exp^*(\BBB))+ 1\ge 2
\end{equation}
 for every non-empty ballean $\BBB$. We begin with the following crucial observation.

\begin{Ps}\label{prop:dsc} For an ideal $\mathcal{I}$ on  a set $X$, one has  
	\begin{enumerate}[(i)]
		\item the non-empty subsets $Y, Z$ of $X$  are close in $\exp(X_{\II-ary})$ if and only if $Y  \triangle Z \in \mathcal{I}$: 
		\item two functions $f, g\in \BBBB_X$  are close in $\CC(X, \mathcal I)$ if and only if $\supp f   \triangle \supp g \in \mathcal{I}$;
		\item for every $A\subseteq X$, $\QQ_{\exp(X_\II)}(A)=\QQ_{\exp(X_{\II-ary})}(A)$, and in particular, 
		\begin{equation}\label{Eq15}
		\begin{gathered}
		%\ \mbox{and } \
		\dsc (\exp(X_{\mathcal{I}}))= \dsc (\exp(X_{\mathcal{I}{\mbox{-}ary}}))  \mbox{ and }\\ 
		\dsc (\exp^*(X_{\mathcal{I}}))=\dsc (\exp^*(X_{\mathcal{I}{\mbox{-}ary}})) =\dsc (\CC (X,{\mathcal{I}}))
		\end{gathered}
		\end{equation}
		\item $\dsc (\exp(X_{\mathcal{I}}))=\dsc (\CC (X,{\mathcal{I}})) + 1$. 
	\end{enumerate} 
\end{Ps}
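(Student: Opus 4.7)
The proof plan is to establish (i) and (ii) by translating the ball formulas into conditions on symmetric differences, then derive (iii) by comparing balls in the two hyperballeans, and finally extract (iv) by tracking the role of the empty set.

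For (i), I will use \eqref{14sett*} to rewrite, for non-empty $A$,
$$
\exp B_{\II\mbox{-}ary}(A,K)=\{Z\neq\emptyset\mid A\triangle Z\subseteq K\}.
$$
Since $\II$ is downward closed, ``$\exists K\in\II:A\triangle Z\subseteq K$'' is equivalent to $A\triangle Z\in\II$; and since $A\triangle C\subseteq(A\triangle B)\cup(B\triangle C)$ while $\II$ is closed under finite unions, the relation $A\triangle Z\in\II$ is already an equivalence relation on $\mathcal P(X)\setminus\{\emptyset\}$. Hence it coincides with the closeness relation without needing to take the transitive closure. For (ii), the formula \eqref{15sett} directly gives $g\in B_\CC(f,A)$ iff $\supp f\triangle\supp g\subseteq A$, and the same argument applies verbatim on $\BBBB_X$.

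For (iii), I will show both inclusions of $\QQ_{\exp(X_\II)}(A)=\QQ_{\exp(X_{\II\mbox{-}ary})}(A)$. The easy inclusion $\subseteq$ follows from comparing \eqref{14sett} and \eqref{14sett*}: in both cases $A\ne\emptyset$ and $A\cap K\ne\emptyset$, the only difference is the strict inclusion $A\setminus K\subsetneq Z$ versus the non-strict one, so $\exp B_\II(A,K)\subseteq\exp B_{\II\mbox{-}ary}(A,K)$ for every $K\in\II$. The reverse inclusion is the main obstacle: given non-empty $A,B$ with $A\triangle B\in\II$, I must reach $B$ from $A$ inside $\exp(X_\II)$, despite the ball collapsing to $\{A\}$ whenever $A\cap K=\emptyset$. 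My plan is a one-step argument with a case split. If $A\cap B\ne\emptyset$, I pick $x\in A\cap B$ and set $K=(A\triangle B)\cup\{x\}\in\II$; then $A\cap K\ni x$, $A\setminus K=(A\cap B)\setminus\{x\}$, and $B=(A\setminus K)\cup Y$ with $Y=(B\setminus A)\cup\{x\}\ne\emptyset$, $Y\subseteq K$, so $B\in\exp B_\II(A,K)$ by \eqref{14sett}. If $A\cap B=\emptyset$, then $A\cup B=A\triangle B\in\II$, so $A,B\in\II$; taking $K=A\cup B$, the ball $\exp B_\II(A,K)$ consists of all non-empty subsets of $A\cup B$, so contains $B$. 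Combined with the trivial identity $\QQ_{\exp(X_\II)}(\emptyset)=\{\emptyset\}=\QQ_{\exp(X_{\II\mbox{-}ary})}(\emptyset)$, this proves the first equality in \eqref{Eq15}. The second follows because $\CC(X,\II)$ and $\exp(X_{\II\mbox{-}ary})$ have exactly the same balls on $\mathcal P(X)\setminus\{\emptyset\}$, producing the same partition of the non-empty sets into connected components.

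For (iv), I will exploit the different status of $\emptyset$ in the two balleans. In $\exp(X_\II)$, $\{\emptyset\}$ is an isolated connected component, so $\dsc(\exp(X_\II))=\dsc(\exp^\ast(X_\II))+1$. In $\CC(X,\II)$, by (ii), the zero function is close to every element of $\II$, so the whole ideal $\II$ forms a single component of $\CC(X,\II)$; the corresponding component in $\exp^\ast(X_\II)$ is $\II\setminus\{\emptyset\}$, still a single non-empty component since $\II$ contains all non-empty finite sets. The remaining components of $\CC(X,\II)$ consist of non-empty sets outside $\II$ and agree with the remaining components of $\exp^\ast(X_\II)$ by (iii). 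Hence $\dsc(\CC(X,\II))=\dsc(\exp^\ast(X_\II))$, and combining with the previous identity yields $\dsc(\exp(X_\II))=\dsc(\CC(X,\II))+1$.
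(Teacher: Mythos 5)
Your proposal is correct and follows essentially the same route as the paper: the symmetric-difference characterization of the balls for (i) and (ii), a ball-level comparison for the easy inclusion in (iii) together with an explicit one-step radius construction (your $K=(A\triangle B)\cup\{x\}$ with a case split plays the role of the paper's $K'=K\cup\{x,y\}$, which handles $A\cap C$ possibly empty without splitting cases), and the same empty-set bookkeeping for (iv). The only cosmetic differences are that you verify (ii) directly from \eqref{15sett} rather than via the asymorphism \eqref{16sett}, and that the fact that deleting $0$ from $\CC(X,\II)$ does not change the component count appears in your step (iv) instead of (iii); neither affects correctness.
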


\begin{proof} (i) Two non-empty subsets $Y$ and $Z$ of $\exp(X_{\II\mbox{-}ary})$ are close if and only if there exists $K\in\II$ such that $Y\in\exp B_{\II\mbox{-}ary}(Z,K)$, i.e., 
	\begin{equation}\label{eq:prop4.1}
	Y\subseteq Z\cup K\quad\text{and}\quad Z\subseteq Y\cup K.
	\end{equation}
	If \eqref{eq:prop4.1} holds, then
	$$
	Y\triangle Z=(Y\setminus Z)\cup(Z\setminus Y)\subseteq ((Z\cup K)\setminus Z)\cup((Y\cup K)\setminus Y)=K\in\II.
	$$
	Conversely, if $Y\triangle Z\in\II$,  then $K:=Y\triangle Z$ trivially satisfies \eqref{eq:prop4.1}.  
	
	(ii) Let $Y=\supp f$ and $Z=\supp g$. If both $f,g$ are non-zero, then $Y,Z$ are  non-empty and the assertion follows from (i)
	and the asymorphism between $\exp^*(X_{\II\mbox{-}ary})$ and $\BBBB(X,{\mathcal{I}})\setminus \{0\}$. 
	If $g=0$, then $f$ is close to $g$ if and only if $f\in \jmath(\II)$, i.e., $Y=\jmath^{-1}(f)\in \II$. As $Z=\emptyset$, this proves the assertion in this case as well. 
	
	(iii) Fix a subset $A$ of $X$. The inclusion  $\QQ_{\exp(X_\II)}(A)\subseteq\QQ_{\exp(X_{\II\mbox{-}ary})}(A)$ follows from Corollary \ref{coro:theo4.1}(i). 
	
	Let us check the inclusion $\QQ_{\exp(X_\II)}(A)\supseteq \QQ_{\exp(X_{\II-ary})}(A)$. If $A=\emptyset$, the claim is trivial, since $\QQ_{\exp Y}(\emptyset)=\{\emptyset\}$, for every ballean $Y$. Otherwise, fix an element $x\in A$. Let $C\in\exp B_{\II\mbox{-}ary}(A,K)$, for some $K\in\II$.  Then $C\ne \emptyset$, so we can fix also a point $y\in C$ and let $K^\prime=K\cup\{x,y\}\in\II$.  Then
	$$
	C\subseteq B_{\II\mbox{-}ary}(A,K)=A\cup K=B_\II(A,K')\quad\text{and}\quad A\subseteq B_{\II\mbox{-}ary}(C,K)=C\cup K=B_\II(C,K'),
	$$
	which shows that $C\in\exp B_{\II}(A,K^\prime)$. Hence, $ C\in \QQ_{\exp(X_\II)}(A)$. 
	
	This proves the equality $\QQ_{\exp(X_\II)}(A)=\QQ_{\exp(X_{\II-ary})}(A)$. It implies the 
	first as well as the second equality in (\ref{Eq15}). 
	%one follows immediately fro the first one, as well as the equality $\dsc (\exp^*(X_{\mathcal{I}}))=\dsc (\exp^*(X_{\mathcal{I}{\mbox{-}ary}}))$. 
	To prove the last equality in (\ref{Eq15}), 
	it suffices to note that $\QQ_{\CC (X,{\mathcal{I}})}(0) = \mathcal{I}$, by virtue of (ii). Hence, 
	$\dsc(\CC (X,{\mathcal{I}})) = \dsc(\CC (X,{\mathcal{I}})\setminus \{0\})$. To conclude, use the fact that 
	$\exp^*(X_{\mathcal{I}{\mbox{-}ary}})$ is asymorphic to $\CC (X,{\mathcal{I}})\setminus \{0\}$. 
	
Item (iv) follows from (iii) and (\ref{cccc}) applied to $\BBB=\exp(X_{\mathcal{I}})$. 
%$\dsc (\exp(X_{\mathcal{I}}))=\dsc (\exp^*(X_{\mathcal{I}})) + 1$. 
\end{proof}

%\subsection{Adding some algebra to the mix}\label{subsec:alg} 
Proposition \ref{prop:dsc} allows us to reduce the computations of the number of connected components of all hyperballeans involved to the computation of the 
cardinal $\dsc(\CC (X,{\mathcal{I}})$. In the sequel we simply identify $\CC_X$ with the Boolean ring $\mathcal P(X)$. So that 
functions $f\in \CC_X$ are identifies with their support and 
the ideals  $\mathcal{I}$ of $X$ are simply the proper ideals of the Boolean ring
$\BBBB_X= \{0,1\}^X = \Z_2^X$,
% (that will be also denote simply by $\BBBB$ in the sequel,  when no confusion is possible), 
containing $\bigoplus_X\Z_2$.  

%An ideal  $\mathcal{I}$ of $X$ (in the above sense) is simply a proper ideal of the Boolean ring $\BBBB_X$. 
%In the other hand, to any proper ideal $\mathcal{J}$ of $\BBBB_X$ containing $\bigoplus_X\Z_2$ the family consisting of all supports of the 
%functions $f \in \mathcal{J}$ is an ideal on $X$. We pursue consistently this line in \S 4. Here we only give the explicit 
%form of the balls. For $0\ne f \in \BBBB_X$ and $\alpha=\chi_A\in  \mathcal J$, the characteristic function of some $A\in \mathcal I$, one has 
%$$
%\exp B_{\II\mbox{-}ary}(f,\alpha)= f +(\alpha)\subseteq f+\mathcal J,
%$$
%where $\mathcal J$ denotes the ideal of $\BBBB_X$ corresponding to $\mathcal I$, and $(\alpha)$ denotes the principal ideal of $\BBBB_X$ generated by $\alpha$ (corresponding, in $\mathcal P(X)$, to the family of all subsets of $A$). On the other hand, 
%$$
%\exp B_\II(f,\alpha)=\begin{cases}\begin{aligned}
%& (f +(\alpha) )\setminus \{f+ f\cdot\alpha\} &\text{if $f\cdot\alpha \ne 0$,}\\
%&\{f\}&\text{if $f\cdot\alpha =0$.}\end{aligned}\end{cases},
%$$ 
%where $f\alpha$ is the product in the ring $\BBBB_X$, i.e., the characteristic function of  the intersection $A\cap \mbox{supp}(f)$. 

%In the other hand, to any proper ideal $\mathcal{J}$ of $\BBBB_X$ 
According to Proposition \ref{prop:dsc}, the connected components of $\CC (X,\mathcal{I})$ are precisely the cosets $f +\mathcal{I}$ of the ideal $\mathcal{I}$, therefore, $\dsc(\CC (X,\mathcal{I} ) )$ coincides with the cardinality of the quotient ring $\BBBB_X/{\mathcal{I}}$: 
\begin{equation}\label{MainFormula}
\dsc(\CC (X,{\mathcal{I}}) ) =\lvert \BBBB_X/\mathcal{I}\rvert = |\mathcal P(X)/\mathcal{I}|.
\end{equation}
In particular, for every infinite set $X$ and an ideal $\II$ of $X$ one has $\dsc(\CC (X,{\mathcal{I}}) ) =2$ if and only if $\mathcal{I}$ is a maximal ideal. This is an obvious consequence of \eqref{MainFormula} as $ \lvert \BBBB_X/\mathcal{I}\rvert = 2$ if and only if the ideal $\mathcal{I}$ is maximal. 

%Nice applications of Proposition \ref{prop:dsc} can be obtained 
%combining with the ``algebraic face" $\BBBB_X$ of the hyperballean $\exp(X) $ discussed in Remark \ref{Bool}. 
%In this line we  identify $\exp(X) = \mathcal P(X)$ with the Boolean ring $\BBBB_X= \{0,1\}^X = \Z_2^X$, that will be also denote simply by $\BBBB$ 
%when no confusion is possible. 
%
%An ideal  $\mathcal{I}$ of $X$ is a proper ideal of the ring $\BBBB_X$ containing the direct sum $\bigoplus _X\Z_2$ of all principal ideals of $\BBBB_X$ (the latter idea corresponds to $\mathfrak F_X$).  
%Let $ q\colon \BBBB_X \to \BBBB_X/\mathcal{I}$ be the quotient ring homomorphism. 
%
%From Proposition  \ref{prop:dsc},  two non-empty subsets $A$ and $C$ of $X$ are close if and only if $q(A) = q(C)$, when $ A$ and $C$ are considered as elements of $\BBBB$. 

\begin{remark}\label{last:Remark}
	The cardinality $\lvert \BBBB_X/\mathcal{I}\rvert$ is easy to compute in some cases, or to get at least an easily obtained estimate  for $ \lvert \BBBB_X/\mathcal{I}\rvert$ from above as we see now.
	To this end let 
	$$
	\iota({\mathcal{I}}):= \min\{\lambda\mid {\mathcal{I}} \   \mbox{ is an intersection of $\lambda $ maximal ideals}\}.
	$$
	Then 
	\begin{equation}\label{above}
	\lvert \BBBB_X/{\mathcal{I}}\rvert \leq \min \{2^{\iota({\mathcal{I}})},2^{\lvert X\rvert}\}. 
	\end{equation}
	Indeed, if $\mathcal{I}= \bigcap \{\mathfrak m_i\mid i < \iota({\mathcal{I}})\}$, where  $\mathfrak m_i$ are maximal ideals of $B$, then $B/\mathcal{I}$ embeds in the product $\prod_{i< \iota({\mathcal{I}})} \BBBB_X/\mathfrak m_i$ having size $\leq 2^{\iota({\mathcal{I}})}$ as $\BBBB_X/\mathfrak m_i\cong \Z_2$ for all $i$. To conclude the proof of
	(\ref{above}) it remains to note that obviously $\lvert \BBBB_X/{\mathcal{I}}\rvert \leq \lvert \BBBB_X \vert = 2^{\lvert X\rvert}$. 
	
	If $\iota({\mathcal{I}}) = n$ is finite, then $ \dsc  (\CC (X,\mathcal{I} ) )=2^{n} $. Indeed, now $\mathcal{I} = \mathfrak m_1\cap\cdots\cap \mathfrak m_n$ is a finite intersection of maximal ideals and the Chinese Remainder Theorem,  applied to the Boolean ring $\BBBB_X$ and the maximal ideals  $ \mathfrak m_1, \ldots  , \mathfrak m_n$, provides a ring isomorphism
	$$ 
	\BBBB_X/\mathcal{I} \cong \prod_{i=1}^n \BBBB/\mathfrak m_i \cong \Z_2^n. 
	$$ 
	In particular, $\lvert \BBBB_X/\mathcal{I}\rvert =  \lvert\Z_2^n\lvert = 2^n$.  By (\ref{MainFormula}), we deduce 
	\begin{equation}\label{Finitecase}
	\dsc  (\CC (X,\mathcal{I} ) )=2^{n}.
	\end{equation}
	
	Let us conclude now with another example. For every infinite set $X$ and the ideal $\mathcal{I}=\mathfrak{F}_{X}$ one has 
	$$
	\dsc(\exp(X_{\mathfrak{F}_{X}}))=\dsc(\CC (X, {\mathfrak{F}_{X}})=2^{\lvert X\rvert}.
	$$ 
	This follows from (\ref{MainFormula}) and $\lvert \mathfrak{F}_{X}\rvert =\lvert X\rvert < 2^{\lvert X\rvert}$, which implies 
	% the fact that for $\mathcal{I} = \mathfrak{F}_{X}$ one has
	$\lvert \BBBB_X/\mathfrak{F}_{X}\rvert=\lvert \BBBB_X\rvert = 2^{\lvert X\rvert}$. 
\end{remark}

%\subsection{Adding some topology: the Stone--\v Cech compactification $\beta X$}\label{subsec:top}
In order to obtain some estimate from {\em below} for $\lvert \BBBB_X/\mathcal{I}\rvert$, we need a deeper insight on the spectrum $\Spec \BBBB_X$ of $\BBBB_X$. Since $\BBBB_X$ is a Boolean ring, $\Spec \BBBB_X$ coincides with the space of all maximal ideals of $ \BBBB_X$, which can be identified with the  Stone--\v Cech compactification $\beta X$ when we endow $X$ with the discrete topology. As usual, 
\begin{itemize}
\item we identify the Stone--\v Cech compactification $\beta X$ with the set of all ultrafilters  on $X$;
\item the family $\{\overline{A}\mid A\subseteq X\}$, where $ \overline{A} =\{ p\in \beta X\mid  A\in p\}$, forms the base for the  topology of $\beta X$; and 
\item the set $X$ is embedded in $\beta X$ by sending $x\in X$ to the principal ultrafilter generated by $x$. 
\end{itemize}
For a filter $\varphi$  on $X$, define a closed subset $\overline{\varphi}$  of $\beta X$ as follows: 
$$
\overline{\varphi} := \bigcap\{\overline{A}\mid A\in\varphi\}.
$$
An ultrafilter $p\in \beta X$ belongs to $\overline{\varphi}$ if and only if $p$ contains the filter  $\varphi$. In other words, 
\begin{equation}\label{eq1}
{\varphi} := \bigcap\{u\mid u\in\overline{\varphi}\}.
\end{equation}

For an ideal $\mathcal{I}$ on $X$, we consider the filter $\varphi_\mathcal{I} =\{X \backslash A\mid A\in \mathcal{I}\}$, and we simply write  $\varphi$ when there is no danger of confusion. Similarly, for a filter $\varphi$ we define the ideal $\mathcal{I}_\varphi = \{X \backslash A\mid A\in \varphi \}$ and we simply write  $\mathcal{I}$ when there is no danger of confusion.

Finally, let $\mathcal{I}^{\wedge}=\overline{\varphi_\mathcal{I}},$
and note that all ultrafilters in $\mathcal{I}^{\wedge}$ are non-fixed, i.e., $ \mathcal{I}^{\wedge}\subseteq \beta X \setminus X$, as $\varphi$ is contained in the Fr\' echet filter of all co-finite sets on $X$ (since $\mathcal I \supseteq \mathfrak F_X$). Moreover, for a subset $A$ of $X$ one has
\begin{equation}\label{newUformula}
A  \in \mathcal{I} \ \mbox{ if and only if } \ A \not \in u \ \mbox{ for all } u \in  \overline{\varphi_\mathcal{I}} = \mathcal{I}^{\wedge}.
\end{equation}

As pointed out above, for any $X$ the compact space $\beta X$ coincides with the spectrum $\Spec \BBBB_X$ of the ring $\BBBB_X$. For an ideal  $\mathcal{I}$ on $X$, $\overline{\varphi_\mathcal{I}}$ is the set of ultrafilters on $X$ containing $\varphi$. For $u\in \overline{\varphi}_\mathcal{I}$ the ideal $\mathcal{I}_{u}$ is maximal and contains $\mathcal{I}$. More precisely, $\mathcal{I} = \bigcap_{u \in \mathcal{I}^{\wedge}} \mathcal{I}_u$. The maximal ideals  $\mathcal{I}_u$, when $u$ runs over $\overline{\varphi}$, bijectively correspond to the   maximal ideals of the quotient $\BBBB_X/\mathcal{I}$; in particular, $\lvert\mathcal{I}^\wedge\rvert = \lvert\Spec(\BBBB_X/\mathcal{I})\rvert$. Along with Remark \ref{last:Remark}, this gives: 

\begin{Ps}\label{OLDprop5.3} Let $\mathcal{I}$ be an ideal on set $X$. If  $\lvert\mathcal{I}^{\wedge}\rvert=n$ is finite, then $ \dsc  (\CC (X,\mathcal{I} ) )=2^{n}$. Otherwise, $ \dsc  (\CC (X,\mathcal{I} ) ) = w(\mathcal{I}^{\wedge})$. 
\end{Ps}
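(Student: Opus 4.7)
The argument hinges on formula \eqref{MainFormula}, which reduces everything to computing $\lvert\BBBB_X/\mathcal{I}\rvert$, together with the fact that the already-established bijection $\mathcal{I}^\wedge \to \Spec(\BBBB_X/\mathcal{I})$, $u\mapsto \mathcal{I}_u/\mathcal{I}$, is actually a homeomorphism: the traces $\overline{A}\cap\mathcal{I}^\wedge$ (for $A\subseteq X$) of the canonical clopen base of $\beta X$ form the Stone clopen base of $\Spec(\BBBB_X/\mathcal{I})$, and $\overline{A}\cap\mathcal{I}^\wedge = \overline{B}\cap\mathcal{I}^\wedge$ holds precisely when $A\triangle B\in\mathcal{I}$ by \eqref{newUformula}. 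Consequently, the Boolean algebra of clopens of $\mathcal{I}^\wedge$ is canonically isomorphic to $\BBBB_X/\mathcal{I}$.

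For the finite case I would simply recycle Remark \ref{last:Remark}. If $\lvert\mathcal{I}^\wedge\rvert = n$, then $\BBBB_X/\mathcal{I}$ has exactly $n$ maximal ideals, so $\mathcal{I}$ is the intersection of the corresponding $n$ maximal ideals of $\BBBB_X$, and the Chinese Remainder Theorem (as in \eqref{Finitecase}) yields $\BBBB_X/\mathcal{I}\cong \Z_2^n$; combined with \eqref{MainFormula} this gives $\dsc(\CC(X,\mathcal{I}))=2^n$.

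For the infinite case I would invoke the classical Stone duality identity $w(S(B))=\lvert B\rvert$ valid for the Stone space of any infinite Boolean algebra $B$, applied to $B=\BBBB_X/\mathcal{I}$ and $S(B)=\mathcal{I}^\wedge$. The upper bound $w(\mathcal{I}^\wedge)\leq\lvert\BBBB_X/\mathcal{I}\rvert$ is immediate because the clopens themselves form an open base of exactly that size. For the lower bound, let $\mathcal{U}$ be any open base of $\mathcal{I}^\wedge$; since $\mathcal{I}^\wedge$ is infinite, so is $\mathcal{U}$. Every clopen $C\subseteq\mathcal{I}^\wedge$ is open and compact, hence equal to a finite union of members of $\mathcal{U}$; since distinct clopens produce distinct such unions, we get
\[
\lvert\BBBB_X/\mathcal{I}\rvert \;=\; \lvert\mathrm{Clop}(\mathcal{I}^\wedge)\rvert \;\leq\; \lvert\{\text{finite subsets of }\mathcal{U}\}\rvert \;=\; \lvert\mathcal{U}\rvert.
\]
Taking the infimum over all bases $\mathcal{U}$ yields $\lvert\BBBB_X/\mathcal{I}\rvert\leq w(\mathcal{I}^\wedge)$, and together with \eqref{MainFormula} this completes the proof.

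The main obstacle I expect is precisely the Stone-space weight lemma used in the infinite case. Its proof is routine but relies on the infiniteness of $\mathcal{U}$ in order to identify the number of its finite subsets with $\lvert\mathcal{U}\rvert$; this is where the dichotomy $\lvert\mathcal{I}^\wedge\rvert$ finite versus infinite genuinely enters the picture. Everything else, namely the identification $\mathcal{I}^\wedge\cong\Spec(\BBBB_X/\mathcal{I})$ as Boolean spaces, the Chinese Remainder reduction in the finite case, and the final assembly, is a transparent consequence of the reduction provided by \eqref{MainFormula}.
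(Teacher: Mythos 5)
Your proposal is correct and follows essentially the same route as the paper: the reduction via \eqref{MainFormula}, the Chinese Remainder argument of Remark \ref{last:Remark} combined with the correspondence $\lvert\mathcal{I}^\wedge\rvert=\lvert\Spec(\BBBB_X/\mathcal{I})\rvert$ for the finite case, and the identity $w(\mathcal{I}^\wedge)=\lvert\mathcal P(X)/\mathcal{I}\rvert$ for the infinite case. The only difference is that the paper simply cites \cite[\S 2]{CN} for this last identity, whereas you prove it yourself via the isomorphism $\mathrm{Clop}(\mathcal{I}^\wedge)\cong\BBBB_X/\mathcal{I}$ (using \eqref{newUformula} and compactness) and the standard weight computation for infinite Stone spaces; that self-contained argument is sound.
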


Here $w(\mathcal{I}^{\wedge})$ denotes the weight of the space $\mathcal{I}^{\wedge}$. The second assertion follows from 
(\ref{MainFormula}) and the equality $ w(\mathcal{I}^{\wedge}) =|\mathcal P(X)/\mathcal{I}|$, its proof can be found in \cite[\S 2]{CN}.

\begin{corollary}\label{prop5.5} Let $\mathcal{I}$ be an ideal on a countably infinite set set $X$ such that $\II^\wedge$ is infinite. 
Then $\dsc (\CC (X,\mathcal{I} ) ) = 2^{\omega}$. 
%	\begin{itemize}
%		\item[(i)] If either $X$ is countable or $\mathcal{I}^{\wedge}$ is separable, then $\dsc (\CC (X,\mathcal{I} ) ) = 2^{\omega}$. 
%		\item[(ii)] If $c (\mathcal{I}^{\wedge}) = 2^{\lvert X\rvert}$, then $\dsc  (\CC (X,\mathcal{I} ) ) = 2^{\lvert X\rvert} = {c (\mathcal{I}^{\wedge})}$. 
%	\end{itemize}
\end{corollary}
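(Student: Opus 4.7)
The plan is to use (\ref{MainFormula}) to rewrite $\dsc(\CC(X,\mathcal{I})) = |\BBBB_X/\mathcal{I}|$, so that the upper bound $\dsc(\CC(X,\mathcal{I})) \leq 2^\omega$ is immediate from $|\BBBB_X| = 2^{|X|} = 2^\omega$. The task thus reduces to producing an injection $\mathcal P(\omega) \hookrightarrow \BBBB_X/\mathcal{I}$.

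Exploiting the hypothesis that $\mathcal{I}^\wedge$ is infinite, I would first pick a countably infinite discrete subspace $\{u_n\}_{n \in \omega} \subseteq \mathcal{I}^\wedge$. From the basic clopen topology of $\beta X$, discreteness yields, for each $n$, a set $A_n \subseteq X$ with $A_n \in u_n$ and $A_n \notin u_m$ for every $m \neq n$. I would then refine by setting $B_n := A_n \setminus \bigcup_{k < n} A_k$; these sets are pairwise disjoint by construction, and $B_n \in u_n$ since $A_k \notin u_n$ for $k < n$ forces $\bigcup_{k<n} A_k \notin u_n$ (as $u_n$ is an ultrafilter). By (\ref{newUformula}), $B_n \notin \mathcal{I}$ for every $n$.

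For each $S \subseteq \omega$, set $A_S := \bigcup_{n \in S} B_n$. If $S \neq T$ and $n \in S \triangle T$, the pairwise disjointness of the $B_n$'s gives $B_n \subseteq A_S \triangle A_T$, and $B_n \notin \mathcal{I}$ then forces $A_S \triangle A_T \notin \mathcal{I}$. Hence $S \mapsto A_S + \mathcal{I}$ is an injection $\mathcal P(\omega) \hookrightarrow \BBBB_X/\mathcal{I}$, giving $|\BBBB_X/\mathcal{I}| \geq 2^\omega$ and completing the proof.

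The main obstacle is securing the discrete subspace $\{u_n\} \subseteq \mathcal{I}^\wedge$ with the separating representatives $A_n$: since $\beta\omega \setminus \omega$ fails to be first countable, extracting a discrete subsequence from an infinite closed subspace is not elementary. The standard route is via Frolik's theorem (every infinite closed subspace of $\beta\omega \setminus \omega$ contains a topological copy of $\beta\omega$), inside which a dense discrete copy of $\omega$ immediately supplies the required $\{u_n\}$. Once this is in hand, the remaining ultrafilter bookkeeping and symmetric-difference manipulations are routine.
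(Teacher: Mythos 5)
Your proof is correct, but it takes a genuinely different route from the paper's. The paper proves this corollary topologically: since $X$ is countable, $\II^\wedge$ is an infinite compact subset of $\beta X\setminus X\cong\beta\N\setminus\N$, hence contains a copy of $\beta\N$, so $w(\II^\wedge)=2^\omega$, and Proposition \ref{OLDprop5.3} (which rests on the Comfort--Negrepontis equality $w(\II^\wedge)=\lvert\mathcal P(X)/\II\rvert$ cited from \cite{CN}) gives the result. You instead compute $\lvert \BBBB_X/\II\rvert$ directly from \eqref{MainFormula}: the upper bound $2^{\lvert X\rvert}=2^\omega$ is trivial, and your lower bound embeds $\mathcal P(\omega)$ into $\BBBB_X/\II$ via pairwise disjoint sets $B_n$ attached to a countable discrete subset of $\II^\wedge$. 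Every step checks out: the clopen base of $\beta X$ yields the separating $A_n$, the ultrafilter property gives $B_n=A_n\setminus\bigcup_{k<n}A_k\in u_n$, \eqref{newUformula} gives $B_n\notin\II$, and downward closure of $\II$ together with $B_n\subseteq A_S\,\triangle\, A_T$ for $n\in S\,\triangle\, T$ separates the cosets. In substance this reproduces the paper's own later machinery -- Lemma \ref{New:Lemma} with $\kappa=\omega$ (including the disjointification (ii$^*$)) and the proof of inequality \eqref{ccc} in Theorem \ref{prop5.4} -- so within the paper it is not new, but as a proof of this corollary it is more elementary and self-contained, trading two nontrivial external results for routine ultrafilter combinatorics; the paper's version, in exchange, is a two-line deduction and also records the weight formula $w(\II^\wedge)=2^\omega$. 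One correction: your closing worry is overcautious. You do not need Frol\'ik-type theorems or any convergent-subsequence argument to extract $\{u_n\}$: every infinite Hausdorff (in particular compact) space contains a countably infinite discrete subspace, an entirely elementary fact, which is exactly how the paper proceeds in Theorem \ref{prop5.4}; discreteness, not closure structure, is all your argument uses. It is worth noting the irony that the $\beta\N$-copy theorem you propose as a fallback is precisely the key step of the paper's actual proof of this corollary.
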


\begin{proof}
Being an infinite compact subset of $\beta X \setminus X$, $\II^\wedge$ contains a copy of $\beta \N$. Therefore, $w(\II^\wedge) =  2^{\omega}$. 
Now Proposition \ref{OLDprop5.3} applies. 
\end{proof}

%\subsubsection{$\dsc(\exp(X))$ in the general case} 

In this section 
we have thoroughly investigated the number of connected components of $\exp(X_\II)$, where $\II$ is an ideal of a set $X$. This leaves open the question to  
estimate $\dsc(\exp(X))$, where $X$ is an arbitrary connected ballean.

Let $Y$ be a subballean of $X$. In particular, $\dsc(\exp(X))\ge\dsc(\exp(Y))$. If $Y$ is thin, we can apply Theorem \ref{theo:thin} so that $Y=Y_{\flat(Y)}$. The results 
from this section give a lower bound of $\dsc(\exp(Y))$, providing in this way also a lower bound for $\dsc(\exp(X))$, since
\begin{equation}\label{eq:finrem}
\dsc(\exp(X))\ge\sup\{\dsc(\exp(Z))\mid\text{$Z$ is a thin subballean of $X$}\}.
\end{equation}
Unfortunately, \eqref{eq:finrem} doesn't provide any useful information in the case when every thin subballean of $X$ is bounded. In fact, if 
$Z$ is a non-empty bounded subballean, then $\exp^\ast(Z)$ is connected and so $\dsc(\exp(Z))=2$.


\begin{thebibliography}{6}
	
	\frenchspacing
	
	%\bibitem{BanCheLya}
	%T. Banakh, O. Chervak, N. Lyaskovska, \emph{Asymptotic dimension and small subsets in locally compact topological groups}. Geometriae Dedicata 169 (1) (2013) 383--396.

\bibitem{BaProReSlo}	 T. Banakh, I. Protasov, D. Repov\u s, S. Slobodianiuk, 
{\em Classifying homogeneous cellular ordinal balleans up to coarse equivalence}, preprint (arxiv: 1409.3910v2).
	
\bibitem{BaZa} T. Banakh, I. Zarichnyi, {\em  Characterizing the Cantor bi-cube in asymptotic categories}, Groups, Geometry, and Dynamics 5 (2011), 691--728.  
	
	
	\bibitem{CN} W.  Comfort, S. Negrepontis, {\em The Theory of Ultrafilters}, Grundlehren der mathematischen Wissenschaften, Band 211, Springer--Verlag,
	Berlin-Heidelberg-New York, 1974
	
	\bibitem{DikZa} D. Dikranjan, N. Zava,  {\em Some categorical aspects  of coarse structures and balleans}, Topology Appl. {225} (2017), 164-194.
	
	\bibitem{DikZav} D. Dikranjan, N. Zava, {\em Preservation and reflection of size properties of balleans}, Topology Appl. 221 (2017), 570--595. 
	
	\bibitem{Dow} A. Dow, Closures of discrete sets in compact spaces, Studia Sci. Math. Hungar. 42 (2005), no. 2, 227--234. 
	
	\bibitem{K}  K. Kunen, {\em  Set theory. An introduction to independence proofs}, Studies in Logic and Foundations of Math., vol. 102, North-Holland, Amsterdam-New York-Oxford, 1980.
	
	\bibitem{PetPro} O. Petrenko, I. Protasov, Balleans and filters, Mat. Stud. 38 (2012), no. 1, 3--11. 
	
	\bibitem{ProBan} I. Protasov, T. Banakh, {\em Ball Structures and Colorings of Groups and Graphs.}- Mat. Stud. Monogr. Ser 11, VNTL, Lviv, 2003.
	
	\bibitem{ProPro} I. Protasov, K. Protasova, {\em On hyperballeans of bounded geometry}, {\tt arXiv:1702.07941v1}.
	
	\bibitem{ProZar} I. Protasov, M. Zarichnyi, {\em General Asymptology}, 2007 VNTL Publishers, Lviv, Ukraine.
	
	\bibitem{Roe} J. Roe, \emph{Lectures on Coarse Geometry}, Univ. Lecture Ser., vol. 31, American Mathematical Society, Providence RI, 2003.
	
	\bibitem{Zav} N. Zava, {\em On $\F$-hyperballeans}, work in progress.
	
\end{thebibliography}
\end{document}